\documentclass[11pt]{article}
\usepackage[a4paper,margin=1.1in]{geometry}
\linespread{1.2}
\usepackage[colorlinks,citecolor=magenta,linkcolor=black]{hyperref}
\pdfpagewidth=\paperwidth \pdfpageheight=\paperheight
\usepackage{amsfonts,amssymb,amsthm,amsmath,eucal,tabu,url}
\usepackage{pgf}
 \usepackage{array}
 \usepackage{tikz-cd}
 \usepackage{pstricks}
 \usepackage{pstricks-add}
 \usepackage{pgf,tikz}
 \usetikzlibrary{automata}
 \usetikzlibrary{arrows}
 \usepackage{indentfirst}
 \pagestyle{myheadings}
\usepackage{tabularx} 

%\usepackage{refcheck}
%*****************************************************************************
% Style

%*****************************************************************************
% Theorems

\theoremstyle{plain}
\newtheorem{thm}{Theorem}[section]
\newtheorem{theorem}[thm]{Theorem}

\newtheorem{lemma}[thm]{Lemma}
\newtheorem{proposition}[thm]{Proposition}
\newtheorem{corollary}[thm]{Corollary}
\newtheorem{conjecture}[thm]{Conjecture}

\theoremstyle{definition}
\newtheorem{definition}[thm]{Definition}
\newtheorem{remark}[thm]{Remark}
\newtheorem{example}[thm]{Example}

\newtheorem{problem}[thm]{Problem}

\newtheorem{thevarthm}[thm]{\varthmname}

\newenvironment{varthm*}[1]{\trivlist\item[]{\bf #1.}\it}{\endtrivlist}

%*****************************************************************************
% Macros

\renewcommand\geq{\geqslant}

\renewcommand\leq{\leqslant}

\newcommand\be{\begin{eqnarray*}}
\newcommand\ee{\end{eqnarray*}}

\newcommand\newop[2]{\def#1{\mathop{\rm #2}\nolimits}}
\newop\log{log}
\newop\ord{ord}
\newop\Gal{Gal}
\newop\SL{SL}
\newop\Bl{Bl}
\newop\mult{mult}
\newop\mass{mass}
\newop\div{div}
\newop\codim{codim}
\newop\sing{sing}
\newop\vdim{vdim}
\newop\edim{edim}
\newop\Ass{Ass}
\newop\size{size}
\newop\reg{reg}
\newop\satdeg{satdeg}
\newop\supp{supp}
\newop\Neg{Neg}
\newop\Nef{Nef}
\newop\Nefh{Nef_H}
\newop\Eff{Eff}
\newop\Zar{Zar}
\newop\MB{MB} 

\newop\MBxC{MB\mathit{(x,C)}}
\newop\NnB{NnB}
\newop\Bigg{Big}
\newop\Effbar{\overline{\Eff}}

\def\keywordname{{\bfseries Keywords}}%
\def\keywords#1{\par\addvspace\medskipamount{\rightskip=0pt plus1cm
\def\and{\ifhmode\unskip\nobreak\fi\ $\cdot$
}\noindent\keywordname\enspace\ignorespaces#1\par}}
\def\subclassname{{\bfseries Mathematics Subject Classification
(2020)}\enspace}
\def\subclass#1{\par\addvspace\medskipamount{\rightskip=0pt plus1cm
\def\and{\ifhmode\unskip\nobreak\fi\ $\cdot$
}\noindent\subclassname\ignorespaces#1\par}}

\begin{document}
\title{Algebraic properties of Levi graphs associated with curve arrangements}
\author{Piotr Pokora and Tim R\"omer}
%\date{\today}
\date{}
\maketitle
{\centering\footnotesize To J\"urgen Herzog on his 80th birthday\par}
\thispagestyle{empty}

\begin{abstract}
In the present paper we study algebraic properties of edge ideals associated with plane curve arrangements via their Levi graphs. Using combinatorial properties of such Levi graphs we are able to describe those monomial algebras being Cohen-Macaulay, Buchsbaum, and sequentially Cohen-Macaulay. We also consider the projective dimension and the Castelnuovo-Mumford regularity for these edge ideals. We provide effective lower and upper bounds on them. As a byproduct of our study we connect, in general, various Buchsbaum properties of squarefree modules.
\keywords{edge ideals; curve arrangements; sequentially Cohen-Macaulay rings; Buchsbaum rings; projective dimension; regularity, square free modules}
\subclass{05E40; 13F55 (Primary); 05C25; 13H10; 14N20 (Secondary)}
% 05E40 Combinatorial aspects of commutative algebra  
% 13F55 Commutative rings defined by monomial ideals; Stanley-Reisner face rings; simplicial complexes 
% ------------------------
% 13H10 Special types (Cohen-Macaulay, Gorenstein, Buchsbaum, etc.)
% 14N20 Configurations and arrangements of linear subspaces
% 05C25 Graphs and abstract algebra (groups, rings, fields, etc.)
% ------------------------
% 13D45 Local cohomology and commutative rings 

\end{abstract}

\section{Introduction}
In the present paper we study edge ideals associated with plane curve arrangements in the complex projective plane. Such ideals can be naturally defined via the notion of a Levi graph that encodes properties of the intersection poset of an arrangement. The goal is to understand how graph theoretic invariants are encoded algebraically in these ideals. Knowing the linkage between  Levi graphs and the geometry of arrangements, we are able to understand many algebraic properties of associated monomial algebras via curve arrangements and their combinatorics. This approach is the main novelty of our present work. 

Our choice of Levi graphs as a potential source for the poset structure for the associated monomial algebras is not accidental and it is motivated by numerous applications. In the context of line arrangements, Levi graphs play a leading role in many subjects of current research. These graphs, which were introduced by Coxeter \cite{Coxeter}, are the bipartite graphs that decode the intersection poset structure of a given arrangement of lines $\mathcal{L}\subset \mathbb{P}^{2}_{\Bbbk}$. More precisely, for a collection of points and lines in a projective plane, we construct a graph with one vertex per point, one vertex per line, and an edge for every incidence between a point and a line. 

It turns out that a Levi graph carries the same combinatorial information as the intersection lattice, which is the reason why this object attracts a lot of attention. One of the most fundamental problems that involves line arrangements in the plane is the celebrated Terao's conjecture on freeness of hyperplane arrangements, i.e., the freeness of the module of polynomial derivatives associated to an arrangement $\mathcal{L}$ is determined by its Levi graph (or equivalently, the intersection lattice). In the case of arbitrary arrangements of curves in the plane, the intersection structure, in general, does not form a lattice, but only the intersection poset. However, in some cases we can still define honest variant of the Levi graph that will decode the intersection structure, and we are going to study it in the context of $d$-arrangements of plane curves, a special class of smooth plane curves that naturally generalizes line arrangements in the plane.

Edge ideals can be studied from the viewpoint of Stanley-Reisner rings, where the poset structure of the facets associated with the abstract simplicial complex is the crucial information that tells us about many algebraic properties. There are many interesting articles and books devoted to properties of edge ideals like \cite{vanTuyl2,VillarrealB}. Here we are going to study edge ideals defined via Levi graphs of arrangements and we want to understand algebraic properties of those ideals by looking at the combinatorics of Levi graphs. From this perspective, we will be able to provide complete classification results which is the key advantage of our approach. More precisely, the main purpose of the present manuscript is to introduce a certain class of edge ideals determined by the intersection posets of $d$-arrangements via the notion of their Levi graph. This will allow us to reinterpret algebraic properties of monomial algebras in the language of the geometry of arrangements. For instance, we will be able to say which intersection posets of $d$-arrangements of curves will provide to us examples of sequentially Cohen-Macaulay algebras, see Theorem \ref{theoremb}. These results might be used in different branches of algebraic geometry, where the presence of edge ideals and the associated bipartite graphs plays an prominent role. In the application context, it is worth mentioning that edge ideals arise naturally in the context of rigid toric varieties determined by bipartite graphs \cite{Portakal}. Except results devoted to edge ideals and arrangements of curves, we study general properties of squarefree modules and their homological properties. In Section \ref{seq:s4} a special class of dual squarefree modules is defined and prove that for squarefree $S = \Bbbk[x_{1}, ..., x_{n}]$-modules being Buchsbaum is equivalent to be $k$-Buchsbaum for every $k\geq 1$, see Theorem \ref{thm:buchsbaum-characterizations}. This general result allows us to conclude that for edge ideals $I(G)$ associated with bipartite graphs $G$ the condition that the quotient algebra $S/I(G)$ is Cohen-Macaulay is  equivalent to be both Buchsbaum and $k$-Buchsbaum for every $k \geq 1$. At the end of the paper we study homological properties of the quotient algebras $S/I(G)$, where $I(G)$ is the edge ideal associated with the Levi graph of a $d$-arrangement, and we provide some effective bounds on the projective dimension and the Castelnuovo-Mumford regularity for powers of edge ideals. Based on examples, several open problems related to the geometry of $d$-arrangements and homological properties of the associated edge ideals are formulated. For some examples, that allow us to made our predictions, we performed symbolic computations supported by scripts written in \verb{Singular{ \cite{Singular}.

\section{Arrangements of curves}
In this section we are going to define our main geometrical object of studies, namely $d$-arrangements of plane curves. Such arrangements can be considered as a natural generalization of line arrangements, especially from the viewpoint of their singularities. Let us emphasize in this place that for us singular points are just the intersection points of curves, i.e., points in the plane where at least two curves from the arrangement meet.
\begin{definition}
Let $\mathcal{C} = \{C_{1}, ..., C_{k}\} \subset \mathbb{P}^{2}_{\mathbb{C}}$ be an arrangement of $k\geq 3$ curves in the plane. We say that $\mathcal{C}$ is a  \emph{$d$-arrangement} if
\begin{itemize}
	\item all curves $C_{i}$ are smooth of the same degree $d\geq 1$;
	\item the singular locus ${\rm Sing}(\mathcal{C})$ consists of only ordinary intersection points -- they look locally like intersections of lines.
\end{itemize}
\end{definition}
In particular, $1$-arrangements are just line arrangements. However, the class of $2$-arrangement is strictly smaller than the class of all conic arrangements in the plane -- usually the intersection points of conic arrangements \textbf{are not ordinary}. If $\mathcal{C}$ is a $d$-arrangement, then we have the following combinatorial count
\begin{equation}
    d^{2} \binom{k}{2} = \sum_{p \in {\rm Sing}(\mathcal{C})} \binom{m_{p}}{2},
\end{equation}
where $m_{p}$ denotes the multiplicity, i.e., the number of curves from $\mathcal{C}$ passing through $p \in {\rm Sing}(\mathcal{C})$. Moreover, for every $C_{j} \in \mathcal{C}$ one has
\begin{equation}
\label{equ2}
d^{2}(k-1) = \sum_{p \in {\rm Sing}(\mathcal{C}) \cap C_{j}} (m_{p}-1).
\end{equation}
Using Hirzebruch's convention, for a given $d$-arrangement $\mathcal{C}$ we denote by $t_{r} = t_{r}(\mathcal{C})$ the number of $r$-fold points, i.e., points in the plane where exactly $r$ curves from the arrangement meet. Additionally, 
for $s$ being the number of intersection points in ${\rm Sing}(\mathcal{C})$ we also have
$$
s = \sum_{r\geq 2} t_{r}.
$$

One of the most important information that each $d$-arrangement carries is the intersection poset which decodes the incidences between curves and intersection points. Since all the intersection points of $d$-arrangements are ordinary, they behave analytically like line arrangements around the intersection points. This observation motivates us to introduce the notion of the Levi graph for $d$-arrangements.

\begin{definition}
Let $\mathcal{C}$ be a $d$-arrangement. Then the \emph{Levi graph} $G = (V,E)$ is a bipartite graph with  $V : = V_{1} \cup V_{2} = \{x_{1}, ..., x_{s}, y_{1}, ..., y_{k}\}$, where each vertex $y_{i}$ corresponds to a curve $C_{i}$, each vertex $x_{j}$ corresponds to an intersection point $p_{j} \in {\rm Sing}(\mathcal{C})$ and $x_{j}$ is joined with $y_{i}$ by an edge in $E$ if and only if $p_{j}$ is incident with $C_{i}$.  
\end{definition} 

The importance of the Levi graphs for $d$-arrangements can be observed in the context of a naive generalization of Terao's conjecture on the freeness of $d$-arrangements. For the completeness of the exposition, let us present this conjecture.
\begin{conjecture}[Generalized Terao conjecture]
\label{Conj:Generalized-Terao-conjecture}
Let $\mathcal{C}_{1},\mathcal{C}_{2} \subset \mathbb{P}^{2}_{\mathbb{C}}$ be two $d$-arrangements for a fixed $d\geq 1$ and denote by $G_{1}, G_{2}$ the associated Levi graphs. Assume that $G_{1}$ and $G_{2}$ are isomorphic, is it true that the freenees of $\mathcal{C}_{1}$ implies that $\mathcal{C}_{2}$ is also free?
\end{conjecture}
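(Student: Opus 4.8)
This statement is a conjecture, and a direct generalization of the classical Terao conjecture, which is itself open over $\mathbb{C}$, so I do not expect to establish it outright; what I can propose is the natural line of attack together with the structural obstruction behind it. The first step is to fix what freeness means for a $d$-arrangement: writing $f=f_1\cdots f_k$ for a product of defining equations in $R=\mathbb{C}[x_0,x_1,x_2]$, one attaches the module of logarithmic derivations $D(\mathcal{C})=\{\theta\in\mathrm{Der}_{\mathbb{C}}(R):\theta(f)\in(f)\}$, equivalently the logarithmic tangent sheaf $T_{\mathbb{P}^2}(-\log\mathcal{C})$, and calls $\mathcal{C}$ \emph{free} when this module is free (the sheaf splits as a direct sum of line bundles). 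Saito's criterion then reduces freeness to the existence of three derivations whose coefficient determinant is a nonzero scalar multiple of $f$.

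The second step is to verify that an isomorphism $G_1\cong G_2$ forces all of the numerical data that freeness constrains. The incidence identities recorded above express the degree $d$, the number $k$ of curves, and every point multiplicity $m_p$ purely through vertex degrees of the Levi graph; from them one recovers the total degree $dk$ and, since each ordinary $m_p$-fold point contributes $(m_p-1)^2$ to the global Tjurina number, also the quantity $\tau(\mathcal{C})=\sum_{r\ge 2}t_r(r-1)^2$. For a free plane curve of degree $e=dk$ the exponents $(d_1,d_2)$ satisfy $d_1+d_2=e-1$ and $d_1d_2=(e-1)^2-\tau$, so a graph isomorphism forces $\mathcal{C}_1$ and $\mathcal{C}_2$ to share the same candidate exponents and the same Chern data of the logarithmic sheaf. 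If one arrangement is free, its exponents are thus read off combinatorially, and the remaining task is to prove that the other attains the same splitting.

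The hard part, and the reason the statement is posed as a conjecture, is precisely the step that also blocks the classical Terao conjecture: the Levi graph records only incidences, not the projective geometry of how the singular points sit relative to one another, and whether the logarithmic sheaf actually splits is sensitive to that finer data through the syzygies of the Jacobian ideal. Two $d$-arrangements with the same $G$ can carry defining equations whose derivation modules have different graded Betti numbers (a $d$-analogue of Ziegler-type pairs), so a splitting need not transport across the isomorphism. I therefore expect a full proof to be out of reach by current methods, and a realistic program is twofold: prove the conjecture for restricted poset classes where the exponents alone determine freeness (for instance inductively or supersolvably free $d$-arrangements), and, in parallel, use the Singular computations already employed in the paper to search among small $d$-arrangements for a pair with coincident Levi graphs but differing freeness, which, as in the positive-characteristic case of the classical conjecture, would settle the question in the negative.
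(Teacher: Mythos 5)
The statement you were given is a conjecture, not a theorem: the paper contains no proof of it, and immediately after stating it the authors record only their expectation that it should fail for $d\geq 2$, citing the conic-line arrangements with ordinary singularities of Schenck and Toh\v{a}neanu as circumstantial evidence. Your treatment is therefore the correct one --- you rightly identify the statement as open, and your background claims are accurate: an ordinary $m$-fold point has Tjurina number $(m-1)^2$, and for a free plane curve of degree $e$ with exponents $(d_1,d_2)$ one has $d_1+d_2=e-1$ and $d_1d_2=(e-1)^2-\tau$, so the candidate exponents are indeed invariants of the Levi graph, which isolates the genuine obstruction exactly where you place it. One remark worth adding: your proposed counterexample search within genuine $d$-arrangements is, if anything, a sharper program than what the paper offers, since the cited conic-line examples mix curves of different degrees and so lie outside the class of $d$-arrangements; the paper's expectation of falsity for $d\geq 2$ is thus suggestive rather than conclusive, and the question remains open in both directions, just as you say.
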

Our expectation is that this generalization of Terao's conjecture should be false for $d\geq 2$, mostly due to the fact that there are examples of conic-line arrangements in the plane with ordinary singularities providing counterexamples to a generalized Terao's conjecture, see \cite{Schenck} for details. 

\section{Edge ideals associated with $d$-arrangements}

Let $G$ be a simple graph with vertices $x_{1}, ..., x_{n}$ and let $S = \Bbbk[x_{1}, ..., x_{n}]$ be a polynomial ring over a fixed field $\Bbbk$. We will use the convention that $x_{i}$ will denote both a vertex of $G$ and also a variable of $S$. 
\begin{definition}
The \emph{edge ideal} $I(G)$ associated with the graph $G$ is the ideal of $S$ generated by the set of all squarefree monomials $x_{i}x_{j}$ such that $x_{i}$ is adjacent to $x_{j}$. The ring $S/I(G)$ is called the \emph{edge ring} of $G$.
\end{definition}
In the setting of Levi graphs associated with $d$-arrangements, we will work in $S = \Bbbk[x_{1}, ..., x_{s},y_{1}, ..., y_{k}]$ in order to emphasize the incidence structure of the Levi graph $G$ of $\mathcal{C}$, namely $x_{i}$'s correspond to the intersection points, $y_{j}$'s correspond to curves in a $d$-arrangement, and we have an edge joining $x_{i}$ and $y_{j}$ if and only if the intersection point $P_{i}$ lies on the curve $C_{j}$ in $\mathcal{C}$. 

\begin{definition}
Given a poset $\mathcal{P}$ with the vertex set $X = \{x_{1}, ..., x_{n}\}$, its \emph{order complex}, denoted by $\triangle$, is the simplicial complex on $X$ whose faces are the chains (linearly ordered sets) in $\mathcal{P}$. 
\end{definition}
With the notation of the definition we have that
$$
\Bbbk[\triangle] = \Bbbk[X] / \langle x_{i}x_{j} \, : \, x_{i} \nsim x_{j} \rangle$$
is the Stanley-Reisner ring of $\triangle$, where $x_{i} \nsim x_{j}$ means that the elements $x_{i}$ and $x_{j}$ are not comparable.
The \emph{simplicial complex of the graph $G$} is defined by
$$\triangle_{G} = \{ A \subseteq V \, : \, A \text{ is an independent set in } G\},$$
where $A$ is an independent set in $G$ if none of its elements are adjacent. Observe that $\triangle_{G}$ is precisely the simplicial complex with the Stanley-Reisner ideal $I(G)$. Recall:

\begin{definition}
Let $I \subset S$ be a graded ideal. The quotient ring $R/I$ is \emph{Cohen-Macaulay} if ${\rm depth}(R/I) = {\rm dim}(R/I)$. The ideal $I$ is called \emph{Cohen-Macaulay} if $R/I$ is Cohen-Macaulay.
\end{definition}
See the book \cite{BrunsHerzog} as a general reference on Cohen-Macaulay ideals and modules. We say that \emph{a graph $G$ is Cohen-Macaulay} over $\Bbbk$ if $R/I(G)$ is a Cohen-Macaulay ring. 

It is worth noticing also that the celebrated result of Eagon and Reiner states that an abstract simplicial complex $\triangle$ is Cohen–Macaulay if and only if the Stanley–Reisner ideal of its Alexander dual, denoted by $I^{\vee}$, has a linear resolution. We are going to show an example of a Cohen-Macaulay graph $G$ that is associated with a point-line configuration in the plane.
\begin{center}
\textbf{Warning}: The class of point-line configurations in the plane is strictly larger that the class of line arrangements.
\end{center}
For us every line arrangement is a point-line configuration, but the revers statement is obviously false. For instance, for a point-line configuration, a point in the configuration need not to be an intersection point!
\begin{example}
Consider a point-line configuration in the plane consisting of $3$ points and $3$ lines, where we have exactly one triple intersection point and two points which are sitting on two distinct lines - these two points are not intersection points! More precisely, below the Levi graph of this configurations is presented -- here $x_{i}$'s denote the points, and $y_{j}$'s denote the lines:
$$G = \bigg( \{x_{1},x_{2},x_{3},y_{1},y_{2},y_{3}\}, \bigg\{\{x_{1},y_{1}\},\{x_{2},y_{1}\},\{x_{2},y_{2}\},\{x_{2},y_{3}\},\{x_{3},y_{3}\}\bigg\}\bigg).$$
Let 
$$S = \Bbbk[x_{1},x_{2},x_{3},y_{1},y_{2},y_{3}]
$$ and consider the following  edge ideal associated with $G$, namely
$$I(G) = \langle x_{1}y_{1}, x_{2}y_{1},x_{2}y_{2},x_{2}y_{3},x_{3}y_{3} \rangle.$$
We show that the monomial algebra
$$M:= S / I(G)$$
is Cohen-Macaulay. In order to do so, we prove that the Alexander dual $I(G)^{\vee}$ has a linear resolution. First of all, one needs to find the Alexander dual of $I(G)$, and this can be done by computing the primary decomposition of $I$. As a result, we obtain
$$
I(G)^{\vee} = \langle y_{1}y_{2}y_{3}, y_{1}y_{3}x_{2}, y_{3}x_{1}x_{2},y_{1}x_{2}x_{3}, x_{1}x_{2}x_{3}\rangle.
$$
The minimal graded free resolution of $N=S / I(G)^{\vee}$ has the following form
$$0 \rightarrow S(-5) \rightarrow S^{5}(-4) \rightarrow S^{5}(-3) \rightarrow S \rightarrow N \rightarrow 0,$$
which shows that $M$ is a Cohen-Macaulay monomial algebra.
\end{example}

\section{Squarefree modules and their properties}
\label{seq:s4}

The main goal of this section is to recall the notion
of squarefree modules, which were introduced and studied by Yanagawa in \cite{Yanagawa}. See also \cite{Tim1, Tim2} for related results.
We study further aspects of them needed in the following. For this we start with our general setup which follows  \cite[Section 1 and 2]{GotoWatanabe}. All modules in this sections are finitely generated $\mathbb{Z}^n$-graded $S$-modules $M=\oplus_{\textbf{a}\in \mathbb{Z}^n} M_{\textbf{a}}$. Such a module $M$ is called \emph{$\mathbb{N}^n$-graded}, if $M_{\textbf{a}}=0$ for $\textbf{a} \not\in \mathbb{N}^n$. Let $\Bbbk$ be a field and denote by $S=\Bbbk [x_{1}, ..., x_{n}]$ the polynomial ring with the standard $\mathbb{Z}^{n}$-grading and let $\mathfrak{m} = (x_{1}, ..., x_{n})$ be the standard graded maximal ideal. In this section we consider $S$-modules $M$ with $\mathbb{Z}^{n}$-grading as a default grading. For such a module one defines the \emph{dual module} $M^{\vee}$ as
$$N:=M^{\vee}= {\rm Hom}_{\Bbbk}(M, \Bbbk).$$
\begin{remark}
The defined above dual $S$-module $N$ to $M$ can be viewed from a viewpoint of the Matlis duality since we have
$$N = {\rm Hom}_{\Bbbk}(M,\Bbbk) \cong {\rm Hom}_{S}(M,E_{S}(\Bbbk)),$$
where $E_{S}(\Bbbk) = {\rm Hom}_{\Bbbk}(S,\Bbbk) = \Bbbk[x_{1}^{-1}, ..., x_{n}^{-1}]$ is the injective hull of $\Bbbk$ as an $S$-module.
The degree $\textbf{b} \in \mathbb{Z}^{n}$ part of the Matlis duality is
$$N_{\textbf{b}} :=(M^{\vee})_{\textbf{b}} = {\rm Hom}_{\Bbbk}(M_{-\textbf{b}}, \Bbbk),$$
so the Matlis duality reverses the grading in the presented sense. 

Under the assumption that $M_{\textbf{b}}$ is a finitely dimensional $\Bbbk$-vector space for every $\textbf{b} \in \mathbb{Z}^{n}$, we have the isomorphism  $(M^\vee)^\vee \cong M$. 
\end{remark}

Let $\mathcal{M}$ be the category of $\mathbb{Z}^{n}$-graded $S$-modules and whose morphisms are homogeneous of degree $0$. Observe that the multiplication by $x_{i}$ gives the following homomorphism of $\Bbbk$-vector spaces
$$x_{i} : M_{\textbf{b}} \longrightarrow M_{\textbf{b} + \varepsilon_{i}}$$
for any $M = \bigoplus_{\textbf{b} \in \mathbb{Z}^{n}}M_{\textbf{b}} \in \mathcal{M}$, where $\varepsilon_{i}$ is the $i$-th vector of the canonical basis of $\mathbb{Z}^{n}$.

Using the Matlis duality we can define the multiplication map on the duals, namely 
$$x_{i} \, : N_{\textbf{b}} \rightarrow N_{\textbf{b} + \varepsilon_{i}}$$
which has the following explicit form
$(x_{i}\cdot \phi)(m) = \phi(x_{i}\cdot m).$

Finally, let us denote the canonical module $\omega_{S}$ in the $\mathbb{Z}^{n}$-grading situation, i.e., 
$
\omega_{S} = S(-1, ...,-1) = S(-\textbf{1}).$ The  graded local duality theorem \cite[Theorem 2.2.2]{GotoWatanabe} states:

\begin{theorem}
\label{local:dual}
For any finitely generated $\mathbb{Z}^{n}$-graded $S$-module $M$ one has
$$H^{i}_{\mathfrak{m}}(M)^{\vee} \cong {\rm Ext}_{S}^{n-i}(M, \omega_{s}).$$
\end{theorem}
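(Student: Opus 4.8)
The plan is to follow the classical route to graded local duality, computing the local cohomology through the \v{C}ech (stable Koszul) complex on $x_{1}, \ldots, x_{n}$ and then reducing the general statement to the case $M = S$ by means of a finite free resolution. Recall that $H^{i}_{\mathfrak{m}}(M) \cong H^{i}(M \otimes_{S} \check{C}^{\bullet})$, where $\check{C}^{\bullet}$ is the complex $0 \to S \to \bigoplus_{i} S_{x_{i}} \to \cdots \to S_{x_{1}\cdots x_{n}} \to 0$. Each term is $\mathbb{Z}^{n}$-graded and the differentials are homogeneous of degree $0$, so all local cohomology modules inherit a $\mathbb{Z}^{n}$-grading compatible with the one on $\mathrm{Ext}$.

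First I would settle the base case $M = S$. A direct inspection of $\check{C}^{\bullet}$ shows that it is exact except at the right end, so $H^{i}_{\mathfrak{m}}(S) = 0$ for $i \neq n$, while $H^{n}_{\mathfrak{m}}(S)$ is the cokernel $S_{x_{1}\cdots x_{n}} / \sum_{i} S_{x_{1}\cdots \widehat{x_{i}} \cdots x_{n}}$, which as a $\mathbb{Z}^{n}$-graded $\Bbbk$-vector space is spanned by the monomials $x_{1}^{a_{1}}\cdots x_{n}^{a_{n}}$ with all $a_{i} < 0$. Comparing this with the definition $E_{S}(\Bbbk) = \Bbbk[x_{1}^{-1}, \ldots, x_{n}^{-1}]$, I would identify $H^{n}_{\mathfrak{m}}(S) \cong E_{S}(\Bbbk)(\textbf{1})$, and hence, using the grading reversal of the Matlis dual recorded in the Remark together with $\omega_{S} = S(-\textbf{1})$, obtain $H^{n}_{\mathfrak{m}}(S)^{\vee} \cong \omega_{S} = {\rm Ext}^{0}_{S}(S, \omega_{S})$. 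Since ${\rm Ext}^{n-i}_{S}(S, \omega_{S}) = 0$ for $i \neq n$, this proves the theorem for $M = S$, and additivity of both sides under finite direct sums together with their behaviour under the shifts $S(\textbf{a})$ extends it to all finitely generated free modules.

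Next I would promote this to arbitrary $M$ by comparing two contravariant connected sequences of functors. Applying the exact functor $(-)^{\vee}$ to the long exact local cohomology sequence of a short exact sequence $0 \to M' \to M \to M'' \to 0$ turns $\{H^{i}_{\mathfrak{m}}(-)^{\vee}\}$ into a $\delta$-functor whose connecting maps lower the cohomological degree; the long exact ${\rm Ext}$-sequence shows that $\{{\rm Ext}^{n-i}_{S}(-,\omega_{S})\}$ is a $\delta$-functor of exactly the same shape. On free modules both sequences vanish outside degree $n$ and agree there by the base case. Because $S$ is a polynomial ring, every finitely generated $M$ admits a finite free resolution $F_{\bullet} \to M$, so I can run an induction on the projective dimension: choosing $0 \to K \to F \to M \to 0$ with $F$ free and applying the two long exact sequences, the base case together with the inductive hypothesis for $K$ and the five lemma yield the isomorphism for $M$.

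The main obstacle is not the homological bookkeeping but the construction of a natural transformation between the two $\delta$-functors that is compatible with the connecting maps, so that the five-lemma step produces a canonical isomorphism rather than merely an abstract one; concretely this amounts to exhibiting the duality pairing between $M \otimes_{S} \check{C}^{\bullet}$ and a complex computing ${\rm Ext}^{\bullet}_{S}(M, \omega_{S})$ and verifying its homogeneity. Equally delicate is keeping track of the grading shifts by $\textbf{1}$ throughout, since an error there would be invisible in the ungraded statement but would spoil the $\mathbb{Z}^{n}$-graded isomorphism asserted in Theorem \ref{local:dual}.
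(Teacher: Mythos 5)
Your proposal is correct, but note that the paper contains no proof to compare it against: Theorem \ref{local:dual} is quoted as a known result from Goto and Watanabe \cite[Theorem 2.2.2]{GotoWatanabe}, and the surrounding text only sets up the $\mathbb{Z}^n$-graded Matlis duality needed to state it. What you have reconstructed is essentially the standard proof of graded local duality over a polynomial ring, along the same lines as the cited source: the \v{C}ech computation of $H^{i}_{\mathfrak{m}}$, the base case $H^{n}_{\mathfrak{m}}(S)\cong E_{S}(\Bbbk)(\textbf{1})$ whose dual is $S(-\textbf{1})=\omega_{S}$ (your grading shifts check out under the convention $M(\textbf{a})_{\textbf{b}}=M_{\textbf{a}+\textbf{b}}$), and dimension shifting along finite graded free resolutions, which exist by the graded Hilbert syzygy theorem. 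The one point you flag as the main obstacle, namely producing a natural transformation of $\delta$-functors rather than a degreewise abstract isomorphism, closes more easily than you suggest: since $H^{n+1}_{\mathfrak{m}}=0$, the functor $H^{n}_{\mathfrak{m}}(-)=H^{n}(-\otimes_{S}\check{C}^{\bullet})$ is right exact and is computed as a cokernel, so $H^{n}_{\mathfrak{m}}(M)\cong M\otimes_{S}H^{n}_{\mathfrak{m}}(S)$, and tensor-hom adjunction then gives the natural isomorphism $H^{n}_{\mathfrak{m}}(M)^{\vee}=\mathrm{Hom}_{\Bbbk}(M\otimes_{S}E_{S}(\Bbbk)(\textbf{1}),\Bbbk)\cong\mathrm{Hom}_{S}(M,\omega_{S})$ in the bottom degree; after reindexing by $j=n-i$, both $\{H^{i}_{\mathfrak{m}}(-)^{\vee}\}$ and $\{\mathrm{Ext}^{n-i}_{S}(-,\omega_{S})\}$ vanish on free modules for $j>0$, hence are coeffaceable contravariant $\delta$-functors and therefore universal, so the degree-zero isomorphism propagates automatically without checking commutative ladders by hand. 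Alternatively, dualizing the double complex $\check{C}^{\bullet}\otimes_{S}F_{\bullet}$ for a graded free resolution $F_{\bullet}\to M$ and letting the collapsing spectral sequence do the bookkeeping achieves the same end. Either way, your argument is complete in substance and consistent with the grading conventions the paper uses when it applies the theorem.
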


Let us also recall the notion of $k$-Buchsbaum modules  from \cite{FV}.
\begin{definition}
Let $M$ be a $\mathbb{Z}^{n}$-graded $S$-module. We say that $M$ is \emph{$k$-Buchsbaum} if $k$ is the minimal non-negative integer satisfying $\mathfrak{m}^{k} \cdot H_{\mathfrak{m}}^{i}(M) = 0$ for $i < {\rm dim} \, M$.
\end{definition}
\begin{remark}
\
\begin{enumerate}
\item[(i)]
Being a module $M$ with $0$-Buchsbaum property is equivalent to the fact that $M$ is Cohen-Macaulay.
\item[(ii)]
The Buchsbaum property implies $1$-Buchsbaumness, see \cite[Corollary 2.4]{StuckradVogel}.
\item[(iii)]
In general, being $1$-Buchsbaum does not imply the Buchsbaum property, see \cite[Example 2.5]{StuckradVogel} for a nice geometrical counterexample. However, in some cases, like squarefree modules, the notion of $1$-Buchsbaum and Buchsbaum modules coincides.
\end{enumerate}
\end{remark}
%\begin{definition}
We say that a $\mathbb{Z}^{n}$-graded $S$-module $M$ is \emph{degreewise finite} if $M_{\textbf{b}}$ is a finitely dimensional $\Bbbk$-vector space for every $\textbf{b} \in \mathbb{Z}^{n}$. 
%\end{definition}
\begin{lemma}
\label{prop:annihilation-dual}
Let $M$ be a degreewise finite $\mathbb{Z}^n$-graded $S$-module and $N=M^{\vee}$. Then $\mathfrak{m}^{k} M = 0$ if and only if $\mathfrak{m}^{k}N = 0$.

\end{lemma}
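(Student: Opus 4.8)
The plan is to reduce everything to the explicit description of how multiplication by a monomial acts on the dual $N$, and then to exploit the symmetry provided by the double-dual isomorphism $(M^\vee)^\vee \cong M$, which holds precisely because $M$ is degreewise finite. First I would record the basic computation: iterating the displayed formula $(x_i \cdot \phi)(m) = \phi(x_i \cdot m)$, one checks by induction on the total degree that for every monomial $u = x^\alpha$ and every $\phi \in N$ one has $(u \cdot \phi)(m) = \phi(u \cdot m)$ for all $m \in M$, keeping track of the grading reversal $N_{\mathbf{b}} = \mathrm{Hom}_\Bbbk(M_{-\mathbf{b}}, \Bbbk)$. Since $\mathfrak{m}^k$ is generated as an ideal by the monomials of total degree $k$, the vanishing $\mathfrak{m}^k M = 0$ is equivalent to $u \cdot M = 0$ for every such monomial $u$, and likewise for $N$.

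With this formula in hand, the forward implication is immediate: if $\mathfrak{m}^k M = 0$, then $u \cdot m = 0$ for each degree-$k$ monomial $u$ and all $m$, so $(u \cdot \phi)(m) = \phi(u \cdot m) = \phi(0) = 0$ for every $\phi \in N$ and every $m$; hence $u \cdot \phi = 0$, and therefore $\mathfrak{m}^k N = 0$.

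For the converse I would avoid a separate point-separation argument and instead feed the forward implication back into itself. The key observation is that $N = M^\vee$ is again degreewise finite, since $N_{\mathbf{b}} = \mathrm{Hom}_\Bbbk(M_{-\mathbf{b}}, \Bbbk)$ has the same (finite) dimension as $M_{-\mathbf{b}}$. Applying the already-proved forward direction to $N$ in place of $M$ gives $\mathfrak{m}^k N = 0 \Rightarrow \mathfrak{m}^k N^\vee = 0$. Now $N^\vee = (M^\vee)^\vee \cong M$ by the double-dual isomorphism recorded in the remark preceding the lemma, and this isomorphism is $S$-linear, so $\mathfrak{m}^k N^\vee = 0$ translates to $\mathfrak{m}^k M = 0$. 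This closes the equivalence.

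The only genuinely delicate point is the bookkeeping in the first step: one must verify that the adjoint formula $(u \cdot \phi)(m) = \phi(u \cdot m)$ is compatible with the grading shift $\mathbf{b} \mapsto \mathbf{b} + \alpha$ and with the reversal built into the Matlis dual, so that the pairings $\phi(u \cdot m)$ and $(u \cdot \phi)(m)$ are evaluated on matching graded components. Once this is set up correctly both implications are formal, and the role of the hypothesis \emph{degreewise finite} is confined to guaranteeing $(M^\vee)^\vee \cong M$. Alternatively, if one prefers a self-contained converse, one can argue directly: for homogeneous $m \in M_{\mathbf{a}}$ and a degree-$k$ monomial $u = x^\alpha$, the assumption $\mathfrak{m}^k N = 0$ forces $\phi(u \cdot m) = (u \cdot \phi)(m) = 0$ for all $\phi \in \mathrm{Hom}_\Bbbk(M_{\mathbf{a}+\alpha}, \Bbbk) = N_{-(\mathbf{a}+\alpha)}$, and since this is the full $\Bbbk$-dual of the finite-dimensional space $M_{\mathbf{a}+\alpha}$, it separates points, giving $u \cdot m = 0$.
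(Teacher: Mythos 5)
Your proof is correct and follows essentially the same route as the paper: the forward implication comes straight from the adjoint formula $(x_i\cdot\phi)(m)=\phi(x_i\cdot m)$ defining multiplication on the dual, and the converse from the double-dual isomorphism $(M^\vee)^\vee\cong M$, which is exactly the paper's (much terser) two-line argument. Your extra care---checking that $N$ is again degreewise finite so the forward direction may be applied to it, and noting the $S$-linearity of the evaluation isomorphism---merely fills in details the paper leaves implicit.
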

\begin{proof}
If $\mathfrak{m}^{k} M = 0$, then by our definition of the multiplication map on the dual modules we have $\mathfrak{m}^{k}N = 0$. The revers implication follows from the fact that $(M^\vee)^\vee = M$.
\end{proof}

\begin{corollary}
\label{cor:kBuchsbaum-Ext}
Let $M$ be a finitely generated $\mathbb{Z}^n$-graded $S$-module.
Then the following statements are equivalent:
\begin{enumerate}
\item[(i)]
$M$ is $k$-Buchsbaum;
\item[(ii)]
 $\mathfrak{m}^{k} \cdot {\rm Ext}_{S}^{n-i}(M, \omega_{s})=0$ for every $i < {\rm dim} \, M$.
\end{enumerate}
\end{corollary}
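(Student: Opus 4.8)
The plan is to deduce the equivalence one cohomological degree at a time, transporting information between local cohomology and Ext by combining the two tools already in place: the graded local duality isomorphism of Theorem \ref{local:dual} and the annihilation transfer of Lemma \ref{prop:annihilation-dual}. Fix an integer $i$ with $i < {\rm dim}\,M$. First I would record that, because $M$ is finitely generated and $\mathbb{Z}^{n}$-graded, each local cohomology module $H^{i}_{\mathfrak{m}}(M)$ is degreewise finite: its graded components are finite-dimensional $\Bbbk$-vector spaces, since $H^{i}_{\mathfrak{m}}(M)$ is Artinian in the graded sense. This is exactly the hypothesis needed to invoke Lemma \ref{prop:annihilation-dual} for the module $H^{i}_{\mathfrak{m}}(M)$ together with its dual $H^{i}_{\mathfrak{m}}(M)^{\vee}$.

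With degreewise finiteness established, Lemma \ref{prop:annihilation-dual} gives, for every $k$,
\[
\mathfrak{m}^{k} \cdot H^{i}_{\mathfrak{m}}(M) = 0 \iff \mathfrak{m}^{k} \cdot H^{i}_{\mathfrak{m}}(M)^{\vee} = 0.
\]
By Theorem \ref{local:dual} there is an isomorphism of $\mathbb{Z}^{n}$-graded $S$-modules $H^{i}_{\mathfrak{m}}(M)^{\vee} \cong {\rm Ext}_{S}^{n-i}(M,\omega_{s})$. As this is an isomorphism of $S$-modules it intertwines the multiplication maps; the $\mathfrak{m}$-action on the Matlis dual used in Lemma \ref{prop:annihilation-dual} is precisely the $S$-module structure appearing in local duality, so the two annihilation conditions coincide:
\[
\mathfrak{m}^{k} \cdot H^{i}_{\mathfrak{m}}(M)^{\vee} = 0 \iff \mathfrak{m}^{k} \cdot {\rm Ext}_{S}^{n-i}(M,\omega_{s}) = 0.
\]
Chaining the two equivalences yields, for each fixed $i < {\rm dim}\,M$ and each $k$,
\[
\mathfrak{m}^{k} \cdot H^{i}_{\mathfrak{m}}(M) = 0 \iff \mathfrak{m}^{k} \cdot {\rm Ext}_{S}^{n-i}(M,\omega_{s}) = 0.
\]

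To conclude, I would quantify over all $i < {\rm dim}\,M$ at once: the defining condition of $k$-Buchsbaumness, namely $\mathfrak{m}^{k} H^{i}_{\mathfrak{m}}(M) = 0$ for every $i < {\rm dim}\,M$, holds if and only if $\mathfrak{m}^{k} {\rm Ext}_{S}^{n-i}(M,\omega_{s}) = 0$ for every $i < {\rm dim}\,M$. Since these conditions agree for each value of $k$, the sets of admissible exponents agree, and in particular the minimal such $k$ in the definition of $k$-Buchsbaum module is the same on both sides, giving the equivalence of (i) and (ii). The only point demanding genuine care---and the main, if modest, obstacle---is the verification that $H^{i}_{\mathfrak{m}}(M)$ is degreewise finite so that Lemma \ref{prop:annihilation-dual} applies, coupled with the compatibility check that the $S$-module structure carried by the Matlis dual in that lemma is identified, under the local duality isomorphism, with the natural $S$-structure on ${\rm Ext}_{S}^{n-i}(M,\omega_{s})$.
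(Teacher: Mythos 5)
Your proposal is correct and takes essentially the same route as the paper, whose entire proof is the one-line citation ``This follows from Lemma \ref{prop:annihilation-dual} and Theorem \ref{local:dual}.'' You have merely made explicit the details the paper leaves implicit --- the degreewise finiteness of $H^{i}_{\mathfrak{m}}(M)$ needed to invoke Lemma \ref{prop:annihilation-dual}, the compatibility of the $S$-module structures under the local duality isomorphism, and the agreement of the minimal exponent $k$ on both sides --- all of which are verified correctly.
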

\begin{proof}
This follows from Lemma \ref{prop:annihilation-dual} and Theorem \ref{local:dual}.
\end{proof}

Now we pass to squarefree modules. Let us recall basics on them, following Yanagawa's approach.
\begin{definition}
We say that a $\mathbb{Z}^{n}$-graded $S$-module $M$ is squarefree if the following conditions are satisfied:
\begin{enumerate}
    \item[(i)] $M$ is finitely generated;
    \item[(ii)] $M = \oplus_{\textbf{a} \in \mathbb{N}^{n}} M_{\textbf{a}}$;
    \item[(iii)] the map
$M_{\textbf{a}} \ni y \mapsto x_{i}y \in M_{\textbf{a}+\varepsilon_{i}}$
is bijective for all $\textbf{a} \in \mathbb{N}^{n}$ and $i \in {\rm supp}(\textbf{a})$.
\end{enumerate}
\end{definition}
For example, a Stanley-Reisner ring $\Bbbk[\triangle]$ is a squarefree $S$-module. Moreover, if $M$ and $N$ are squarefree $S$-modules and $f : M \rightarrow N$ is a degree-preserving map, then both ${\rm ker}(f)$ and ${\rm coker}(f)$ are squarefree -- see Yanagawa's paper \cite{Yanagawa} for details. One immediately sees:

\begin{lemma}
\label{lemma:squarefree-dual}
Let $M$ be a finitely generated $\mathbb{Z}^n$-graded $S$-module and $N=M^{\vee}$.
Then the following statements are equivalent:
\begin{enumerate}
\item[(i)]
$M$ is squarefree;
\item[(ii)]
The map
$N_{\textbf{b}-\varepsilon_{i}} \ni y \mapsto x_{i}y \in N_{\textbf{b}}$
is bijective for all $\textbf{b} \in -\mathbb{N}^{n}$ and $i \in {\rm supp}(\textbf{b})$, and $N_{\textbf{b}} = 0$ provided that $\textbf{b} \not\in -\mathbb{N}^{n}$.
\end{enumerate}
\end{lemma}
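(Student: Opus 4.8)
The plan is to exploit the Matlis duality dictionary that the preceding remark and Lemma~\ref{prop:annihilation-dual} have set up, translating each of the three defining conditions of a squarefree module into a dual statement about $N = M^\vee$. Since the equivalence is a biconditional, I would prove the two implications separately, but the symmetry provided by the reflexivity $(M^\vee)^\vee \cong M$ (available because $M$ finitely generated forces degreewise finiteness) means I only need to argue one direction carefully and then invoke reflexivity for the converse.

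First I would record the degree-reversal formula $N_{\textbf{b}} = \mathrm{Hom}_\Bbbk(M_{-\textbf{b}}, \Bbbk)$ and the explicit dual multiplication $(x_i \cdot \phi)(m) = \phi(x_i \cdot m)$, both stated in the remark. From the formula $N_{\textbf{b}} = M_{-\textbf{b}}^\vee$, condition (ii) of squarefreeness ($M_{\textbf{a}} = 0$ for $\textbf{a} \notin \mathbb{N}^n$) translates immediately into $N_{\textbf{b}} = 0$ for $-\textbf{b} \notin \mathbb{N}^n$, i.e.\ $\textbf{b} \notin -\mathbb{N}^n$, which is exactly the last clause of (ii) in the lemma. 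This is the easy half of the bookkeeping and I would dispatch it first.

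The core of the argument is condition (iii). Here I would use the standard linear-algebra fact that a $\Bbbk$-linear map $f \colon U \to V$ between finite-dimensional spaces is bijective if and only if its transpose $f^\vee \colon V^\vee \to U^\vee$ is bijective. The multiplication map $x_i \colon M_{\textbf{a}} \to M_{\textbf{a}+\varepsilon_i}$ dualizes, under the explicit formula above, precisely to the map $x_i \colon N_{-(\textbf{a}+\varepsilon_i)} \to N_{-\textbf{a}}$, that is to $x_i \colon N_{\textbf{b}-\varepsilon_i} \to N_{\textbf{b}}$ with $\textbf{b} = -\textbf{a}$. The index translation needs care: as $\textbf{a}$ ranges over $\mathbb{N}^n$ with $i \in \mathrm{supp}(\textbf{a})$, the dual index $\textbf{b} = -\textbf{a}$ ranges over $-\mathbb{N}^n$ with $i \in \mathrm{supp}(\textbf{b})$, matching the quantifier in clause (ii) of the lemma exactly. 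Thus bijectivity of the $M$-multiplication for all admissible $\textbf{a}$ is equivalent, via transpose, to bijectivity of the $N$-multiplication for all admissible $\textbf{b}$.

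The main obstacle I anticipate is not conceptual but purely combinatorial: verifying that the support condition $i \in \mathrm{supp}(\textbf{a})$ transforms correctly under $\textbf{b} = -\textbf{a}$ once one accounts for condition (ii). Because $M$ is supported on $\mathbb{N}^n$, the only maps whose bijectivity carries information are those between nonzero graded pieces, and I must check that the transpose correspondence sends exactly the ``interesting'' maps on the $M$ side to the ``interesting'' maps on the $N$ side, with no spurious or missing indices at the boundary where some coordinate of $\textbf{a}$ is zero. I would handle this by noting that for $i \notin \mathrm{supp}(\textbf{a})$ the multiplication $x_i$ lands in a new support stratum and its bijectivity is not asserted on either side, so the two quantified families of maps are in exact correspondence. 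Having established this direction, the converse follows by applying the forward direction to $N^\vee \cong M$ together with Lemma~\ref{prop:annihilation-dual}'s underlying reflexivity, completing the equivalence.
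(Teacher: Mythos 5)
Your argument is correct and is exactly the routine verification the paper has in mind: the paper states this lemma without proof (``One immediately sees''), and filling it in amounts precisely to your dualization of conditions (ii) and (iii) of squarefreeness via the degree-reversal $N_{\textbf{b}}=\mathrm{Hom}_{\Bbbk}(M_{-\textbf{b}},\Bbbk)$, the transpose criterion for bijectivity (legitimate degreewise since finite generation gives degreewise finiteness), and the index match $\mathrm{supp}(-\textbf{a})=\mathrm{supp}(\textbf{a})$. One cosmetic point: the converse needs no separate appeal to reflexivity, since $f$ is bijective if and only if its transpose is, so both implications come at once; also, reflexivity is stated in the remark preceding Lemma~\ref{prop:annihilation-dual}, not in that lemma itself.
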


\begin{definition}[Dual squarefree module]
Let $M$ be a $\mathbb{Z}^{n}$-graded $S$-module. We say that $M$ is a \emph{dual squarefree module} if the following conditions are satisfied:
\begin{enumerate}
    \item[(i)] $M$ is Artinian;
    \item[(ii)] $N_{\textbf{b}} = 0$ provided that $\textbf{b} \not\in -\mathbb{N}^{n}$;
    \item[(iii)] The multiplication map 
    $N_{\textbf{b}-\varepsilon_{i}} \ni y \mapsto x_{i}y \in N_{\textbf{b}}$
is bijective for all $\textbf{b} \in -\mathbb{N}^{n}$ and $i \in {\rm supp}(\textbf{b})$.
\end{enumerate}
\end{definition}
\begin{corollary}
\label{cor:localcoh-sq}
If $M$ is a squarefree $S$-module, then $H^{i}_{\mathfrak{m}}(M)$ are dual squarefree $S$-modules for all $i$.
\end{corollary}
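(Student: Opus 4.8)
The plan is to pass to the Matlis dual and reduce everything to a statement about $\mathrm{Ext}$ modules. By the graded local duality theorem (Theorem \ref{local:dual}) we have $H^{i}_{\mathfrak{m}}(M)^{\vee} \cong \mathrm{Ext}^{n-i}_{S}(M,\omega_{S})$. Since $M$ is finitely generated and $S$ is Noetherian, the module $E := \mathrm{Ext}^{n-i}_{S}(M,\omega_{S})$ is a finitely generated $\mathbb{Z}^{n}$-graded module, so its Matlis dual $H^{i}_{\mathfrak{m}}(M)\cong E^{\vee}$ is Artinian; this gives condition (i) in the definition of a dual squarefree module. For conditions (ii) and (iii) I would invoke Lemma \ref{lemma:squarefree-dual} applied to the finitely generated module $E$ and its dual $E^{\vee}=H^{i}_{\mathfrak{m}}(M)$: the lemma asserts that $E$ is squarefree if and only if $E^{\vee}$ vanishes outside $-\mathbb{N}^{n}$ and has bijective multiplication maps $(E^{\vee})_{\mathbf{b}-\varepsilon_{i}}\to(E^{\vee})_{\mathbf{b}}$ for $\mathbf{b}\in-\mathbb{N}^{n}$ and $i\in\mathrm{supp}(\mathbf{b})$, which are exactly conditions (ii) and (iii). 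Thus the entire corollary reduces to the single assertion that $\mathrm{Ext}^{n-i}_{S}(M,\omega_{S})$ is squarefree whenever $M$ is squarefree.

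To establish that reduced claim I would first note that a squarefree module is generated in squarefree degrees: if $\mathbf{a}\in\mathbb{N}^{n}$ has $a_{i}\geq 2$ for some $i$, then $i\in\mathrm{supp}(\mathbf{a})$ and axiom (iii) of squarefreeness makes the map $M_{\mathbf{a}-\varepsilon_{i}}\xrightarrow{\,x_{i}\,}M_{\mathbf{a}}$ surjective, so $M_{\mathbf{a}}\subseteq\mathfrak{m}M$ and no minimal generator lives in degree $\mathbf{a}$. Consequently the minimal $\mathbb{Z}^{n}$-graded free resolution $F_{\bullet}\to M$ has $F_{0}=\bigoplus_{j}S(-\mathbf{a}_{j})$ with each $\mathbf{a}_{j}\in\{0,1\}^{n}$; the first syzygy $\ker(F_{0}\to M)$ is the kernel of a degree-preserving map between squarefree modules, hence squarefree by Yanagawa's closure property, and iterating shows that every $F_{p}$ is a direct sum of twists $S(-\mathbf{a})$ with $\mathbf{a}$ squarefree.

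I would then dualize into $\omega_{S}=S(-\mathbf{1})$. For a squarefree twist one computes $\mathrm{Hom}_{S}(S(-\mathbf{a}),\omega_{S})=S(\mathbf{a}-\mathbf{1})=S(-(\mathbf{1}-\mathbf{a}))$, and since $\mathbf{a}\in\{0,1\}^{n}$ also $\mathbf{1}-\mathbf{a}\in\{0,1\}^{n}$, so the dual is again a squarefree free module. Hence the cochain complex $\mathrm{Hom}_{S}(F_{\bullet},\omega_{S})$ consists of squarefree modules joined by degree-$0$ differentials, and its cohomology $\mathrm{Ext}^{n-i}_{S}(M,\omega_{S})$ is a kernel modulo an image of squarefree modules under degree-preserving maps, hence squarefree by the same closure property. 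Feeding this back into the first paragraph completes the proof.

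I expect the main obstacle to be the second paragraph, namely establishing that the minimal free resolution of a squarefree module is supported in squarefree degrees; this is where Yanagawa's structure theory is genuinely needed, via the combination of generation in squarefree degrees with stability of the category of squarefree modules under taking syzygies. Once that is in place, the degree bookkeeping $\mathbf{a}\mapsto\mathbf{1}-\mathbf{a}$ together with one application each of local duality and Lemma \ref{lemma:squarefree-dual} is routine.
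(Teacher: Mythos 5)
Your proof is correct and follows essentially the same route as the paper: both arguments reduce the corollary, via graded local duality (Theorem \ref{local:dual}) together with Lemma \ref{lemma:squarefree-dual} (plus Artinianness of the Matlis dual of a finitely generated module), to the single claim that $\mathrm{Ext}^{n-i}_{S}(M,\omega_{S})$ is squarefree. The only difference is that the paper cites Yanagawa for that claim, whereas you reprove it from his closure properties by observing that the minimal free resolution of a squarefree module has squarefree twists and that dualizing into $\omega_{S}=S(-\mathbf{1})$ sends $S(-\mathbf{a})$ to $S(-(\mathbf{1}-\mathbf{a}))$ --- a correct, self-contained expansion of the same argument.
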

\begin{proof}
Observe that if $M$ is a squarefree $S$-module, then also ${\rm Ext}^{i}_{S}(M,\omega_{S})$ is squarefree for all $i$, which follows from Yanagawa's paper \cite{Yanagawa}. The claim of the corollary follows from Lemma \ref{lemma:squarefree-dual} and Theorem \ref{local:dual}.
\end{proof}

\begin{lemma}
\label{lemma:supporthelper}
Let $M$ be a $\mathbb{Z}^{n}$-graded $S$-module. 
\begin{enumerate}
\item[(i)]
If $M$ is either squarefree or dual squarefree and  $\dim_{\Bbbk} M<\infty$, then $M_{\textbf{b}}=0$ for any $\textbf{b}\in \mathbb{Z}^{n}\setminus \{0\}$.
\item[(ii)]
If $M$ is dual squarefree, then $\mathfrak{m}^{1} M=0$ if and only if $M_{\textbf{b}}=0$ for any $\textbf{b} \in \mathbb{Z}^{n}\setminus \{0\}$.
\end{enumerate}
\end{lemma}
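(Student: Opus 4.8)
The plan is to exploit the single structural feature shared by squarefree and dual squarefree modules: the bijectivity of the multiplication maps in part (iii) of each definition lets one \emph{translate} a graded piece $M_{\textbf{b}}$ along the directions in $\supp(\textbf{b})$ without changing its dimension. For a squarefree module and any $\textbf{a}\in\mathbb{N}^n$ with $a_i\geq 1$, the map $x_i\colon M_{\textbf{a}}\to M_{\textbf{a}+\varepsilon_i}$ is bijective, so iterating these isomorphisms collapses every exponent in $F=\supp(\textbf{a})$ down to $1$; thus $M_{\textbf{a}}\cong M_{\textbf{1}_F}$, where $\textbf{1}_F$ is the squarefree vector supported on $F$, and in particular $\dim_{\Bbbk}M_{\textbf{a}}$ depends only on $\supp(\textbf{a})$. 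For a dual squarefree module the mirror argument, run in $-\mathbb{N}^n$ using the bijections $x_i\colon M_{\textbf{b}-\varepsilon_i}\to M_{\textbf{b}}$ for $i\in\supp(\textbf{b})$, gives $M_{\textbf{b}}\cong M_{-\textbf{1}_F}$ with $F=\supp(\textbf{b})$, so again $\dim_{\Bbbk}M_{\textbf{b}}$ is a function of the support alone. I would isolate this ``support-invariance of dimension'' as the one computational lemma underlying both parts.

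For part (i) I would then argue by a counting contradiction. Suppose $M$ is squarefree (resp.\ dual squarefree) with $\dim_{\Bbbk}M<\infty$, and suppose toward a contradiction that $M_{\textbf{b}_0}\neq 0$ for some $\textbf{b}_0\neq 0$; after invoking condition (ii) we may take $\textbf{b}_0\in\mathbb{N}^n$ (resp.\ $-\mathbb{N}^n$) with $F=\supp(\textbf{b}_0)\neq\emptyset$. By support-invariance, the graded piece of $M$ in each multidegree $t\cdot\textbf{1}_F$ (resp.\ $-t\cdot\textbf{1}_F$), for $t=1,2,3,\dots$, has the same dimension $\dim_{\Bbbk}M_{\textbf{b}_0}\neq 0$. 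These are infinitely many distinct nonzero graded components, contradicting $\dim_{\Bbbk}M<\infty$. Hence $M_{\textbf{b}}=0$ for every $\textbf{b}\neq 0$, as claimed.

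For part (ii) the equivalence splits into two short implications. The direction ``$M_{\textbf{b}}=0$ for all $\textbf{b}\neq 0$'' implies ``$\mathfrak{m}M=0$'' is immediate: then $M=M_0$, and each $x_i$ sends $M_0$ into $M_{\varepsilon_i}=0$. For the converse, assume $\mathfrak{m}M=0$, i.e.\ $x_iM=0$ for all $i$, and fix any $\textbf{b}\neq 0$; by condition (ii) we may take $\textbf{b}\in-\mathbb{N}^n$, so $F=\supp(\textbf{b})\neq\emptyset$ and we may choose $i\in F$. Condition (iii) makes $x_i\colon M_{\textbf{b}-\varepsilon_i}\to M_{\textbf{b}}$ bijective, in particular surjective; but $x_iM=0$ forces this map to be zero, and a surjection with zero image has trivial target, so $M_{\textbf{b}}=0$. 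This establishes the converse and completes part (ii).

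The routine verifications are the index bookkeeping (that $\supp(\textbf{a})$ is exactly the set of $i$ for which the relevant map is asserted bijective, and that $\textbf{b}-\varepsilon_i$ remains in $-\mathbb{N}^n$ when $i\in\supp(\textbf{b})$) and the parallel treatment of the two cases. The only place needing genuine care---the main obstacle---is ensuring that the chain of bijections in the support-invariance lemma is applied \emph{only} in degrees where condition (iii) truly guarantees bijectivity, i.e.\ never across the hyperplane $b_i=0$, where multiplication by $x_i$ need not be bijective. Organizing the reduction as an induction on $\sum_i|b_i|$, so that each step strictly decreases an exponent in absolute value while keeping every coordinate of the support strictly away from $0$, keeps this transparent.
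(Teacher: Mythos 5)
Your proof is correct, and it fills in what the paper deliberately leaves out: Lemma \ref{lemma:supporthelper} is stated there without any proof, as an immediate consequence of the definitions. Your support-invariance argument --- iterating the bijections $x_i\colon M_{\mathbf{a}}\to M_{\mathbf{a}+\varepsilon_i}$ (resp.\ $x_i\colon M_{\mathbf{b}-\varepsilon_i}\to M_{\mathbf{b}}$) along directions in $\supp(\mathbf{a})$ to get infinitely many isomorphic nonzero graded pieces, plus the surjectivity argument for part (ii) --- is precisely the intended routine verification, and is the same translation trick the authors use explicitly in the implication (iv) $\Rightarrow$ (ii) of Theorem \ref{thm:buchsbaum-characterizations}.
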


The first main result of this paper connects various Buchsbaum properties of squarefree modules as follows:

\begin{theorem}
\label{thm:buchsbaum-characterizations}
Let $M$ be a squarefree module over $S=\Bbbk[x_{1}, ..., x_{n}]$ and $d=\dim M$. 
Then the following statements are equivalent:
\begin{enumerate}
\item[(i)]
$M$ is Buchsbaum;
\item[(ii)]
$M$ is $1$-Buchsbaum, or equivalently  $\mathfrak{m}^{1} \cdot {\rm Ext}_{S}^{n-i}(M, \omega_{s})=0$ for every $i < d$;
\item[(iii)]
$M$ is $k$-Buchsbaum for every $k\geq 1$, or equivalently  $\mathfrak{m}^{k} \cdot {\rm Ext}_{S}^{n-i}(M, \omega_{s})=0$ for every $i < d$ and  $k\geq 1$;
\item[(iv)]
$M$ is $k$-Buchsbaum for some $k\geq 1$, or equivalently  $\mathfrak{m}^{k} \cdot {\rm Ext}_{S}^{n-i}(M, \omega_{s})=0$ for every $i < d$ and for some $k\geq 1$.
\end{enumerate}
\end{theorem}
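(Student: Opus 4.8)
The plan is to pivot between the two equivalent languages available here: by Corollary \ref{cor:kBuchsbaum-Ext} together with graded local duality (Theorem \ref{local:dual}), each of the conditions (ii)--(iv) is literally an annihilation statement about the local cohomology modules $H^{i}_{\mathfrak{m}}(M)$ for $i<d$, namely $\mathfrak{m}^{k}H^{i}_{\mathfrak{m}}(M)=0$ for the appropriate quantifier on $k$. Once this translation is in place, everything reduces to two genuinely nontrivial implications, $(iv)\Rightarrow(ii)$ and $(ii)\Rightarrow(i)$; the others are immediate. Indeed $(i)\Rightarrow(ii)$ is the general fact that Buchsbaum implies $1$-Buchsbaum, recorded in the remark following the definition of $k$-Buchsbaum (see \cite[Corollary 2.4]{StuckradVogel}); since $\mathfrak{m}^{k}\subseteq\mathfrak{m}$ for $k\ge 1$, the implication $(ii)\Rightarrow(iii)$ is trivial; and $(iii)\Rightarrow(iv)$ is trivial by specializing to $k=1$. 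Together with the two nontrivial implications this closes the loop, giving $(i)\Leftrightarrow(ii)$ and $(ii)\Leftrightarrow(iii)\Leftrightarrow(iv)$.

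For $(iv)\Rightarrow(ii)$ the squarefree hypothesis enters through Corollary \ref{cor:localcoh-sq}, which makes each $L:=H^{i}_{\mathfrak{m}}(M)$ a dual squarefree module. Assuming $\mathfrak{m}^{k}L=0$ for some $k\ge 1$, I would fix an arbitrary $\mathbf{b}\in -\mathbb{N}^{n}\setminus\{0\}$ and an index $i\in\operatorname{supp}(\mathbf{b})$. Because $\mathbf{b}-j\varepsilon_{i}$ stays in $-\mathbb{N}^{n}$ with $i$ in its support for every $j\ge 0$, iterating the bijective multiplication maps of the dual squarefree structure shows that $x_{i}^{k}\colon L_{\mathbf{b}-k\varepsilon_{i}}\to L_{\mathbf{b}}$ is bijective, hence surjective; therefore $L_{\mathbf{b}}=x_{i}^{k}L_{\mathbf{b}-k\varepsilon_{i}}\subseteq\mathfrak{m}^{k}L=0$. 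Thus $L_{\mathbf{b}}=0$ for all $\mathbf{b}\ne 0$, and Lemma \ref{lemma:supporthelper}(ii) converts this back into $\mathfrak{m}L=0$, which is $(ii)$.

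The main obstacle is $(ii)\Rightarrow(i)$: for squarefree modules the $1$-Buchsbaum condition must already force the full Buchsbaum property. My plan is to invoke the classical criterion that $M$ is Buchsbaum exactly when the canonical maps $\phi^{i}\colon\operatorname{Ext}^{i}_{S}(\Bbbk,M)\to H^{i}_{\mathfrak{m}}(M)$ are surjective for all $i<d$ (see \cite{StuckradVogel}). First, $(ii)$ combined with Corollary \ref{cor:localcoh-sq} and Lemma \ref{lemma:supporthelper}(ii) shows that each $H^{i}_{\mathfrak{m}}(M)$ with $i<d$ is concentrated in the single $\mathbb{Z}^{n}$-degree $\mathbf{0}$, so surjectivity of $\phi^{i}$ need only be checked in degree $\mathbf{0}$. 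The key computation is to identify $\phi^{i}$ there: realizing $H^{\bullet}_{\mathfrak{m}}(M)$ through the \v{C}ech complex $L^{\bullet}$ with $L^{p}=\bigoplus_{|I|=p}M_{x_{I}}$, the squarefree bijectivity of the maps $M_{\mathbf{a}}\to M_{\mathbf{a}+\varepsilon_{i}}$ for $i\in\operatorname{supp}(\mathbf{a})$ gives canonical isomorphisms $(M_{x_{I}})_{\mathbf{0}}\cong M_{\varepsilon_{I}}$ compatible with the differentials. Hence the degree-$\mathbf{0}$ strand $L^{\bullet}_{\mathbf{0}}$ is isomorphic, as a complex, to the degree-$\mathbf{0}$ strand of $\operatorname{Hom}_{S}(K_{\bullet}(\mathbf{x}),M)$, where $K_{\bullet}(\mathbf{x})$ is the Koszul complex resolving $\Bbbk$; and this isomorphism is precisely the natural comparison map. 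Consequently $\phi^{i}$ is an isomorphism in degree $\mathbf{0}$ and, its target vanishing in all other degrees, is surjective for every $i<d$, so $M$ is Buchsbaum. I expect the delicate points to be verifying that the degree-$\mathbf{0}$ identification of the \v{C}ech and Koszul complexes is truly compatible with the natural transformation defining $\phi^{i}$, and noting that surjectivity of all $\phi^{i}$ ($i<d$)---which here also forces the $H^{i}_{\mathfrak{m}}(M)$ to have finite length---does yield the full Buchsbaum property and not merely $1$-Buchsbaumness.
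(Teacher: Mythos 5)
Your proposal is correct, and most of it coincides with the paper's own proof: the translation of the dual statements via Corollary \ref{cor:kBuchsbaum-Ext}, the trivial implications (ii)$\Rightarrow$(iii)$\Rightarrow$(iv), and the implication (iv)$\Rightarrow$(ii) are all as in the paper --- your direct iteration of the bijective multiplication maps on the dual squarefree module $H^{i}_{\mathfrak{m}}(M)$ (Corollary \ref{cor:localcoh-sq}), giving that $x_{i}^{k}\colon H^{i}_{\mathfrak{m}}(M)_{\mathbf{b}-k\varepsilon_{i}}\to H^{i}_{\mathfrak{m}}(M)_{\mathbf{b}}$ is bijective, followed by Lemma \ref{lemma:supporthelper}(ii), is exactly the paper's argument, which is merely phrased contrapositively there. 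Where you genuinely diverge is the equivalence (i)$\Leftrightarrow$(ii). The paper outsources both directions to Yanagawa \cite[Corollary 2.7, (a)$\Leftrightarrow$(d)]{Yanagawa}: once (ii) is translated into the concentration $H^{i}_{\mathfrak{m}}(M)=H^{i}_{\mathfrak{m}}(M)_{0}$ for $i<d$ via Corollary \ref{cor:localcoh-sq} and Lemma \ref{lemma:supporthelper}(ii), Buchsbaumness is equivalent to that concentration by Yanagawa's result. You instead obtain (i)$\Rightarrow$(ii) from the general St\"uckrad--Vogel fact that Buchsbaum implies $1$-Buchsbaum (which the paper records in its remark, so this is legitimate and even bypasses one use of Yanagawa), and you prove (ii)$\Rightarrow$(i) from scratch: the degree-$\mathbf{0}$ identification $(M_{x_{I}})_{\mathbf{0}}\cong M_{\varepsilon_{I}}$ for squarefree $M$ makes the natural comparison chain map from $\operatorname{Hom}_{S}(K_{\bullet}(\mathbf{x}),M)$ to the \v{C}ech complex an isomorphism in degree $\mathbf{0}$ (the bookkeeping with shifts and differentials does work out, since the \v{C}ech complex is the colimit of Koszul cochain complexes and the squarefree condition stabilizes the colimit at the first stage), whence $\phi^{i}$ is surjective for $i<d$ and the St\"uckrad--Vogel surjectivity criterion yields Buchsbaumness. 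This is sound and in fact essentially reconstructs Yanagawa's own proof of (d)$\Rightarrow$(a), so what your route buys is self-containedness at the cost of redoing the Koszul--\v{C}ech comparison that the paper cites as a black box. One small correction: the surjectivity criterion is only a \emph{sufficient} condition for the Buchsbaum property, not an equivalence as your ``exactly when'' suggests --- the converse fails for general modules (see \cite{StuckradVogel}); since you use only the sufficiency direction, your proof is unaffected, but the citation should be stated one-directionally.
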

\begin{proof}
The dual statements follow all from Corollary \ref{cor:kBuchsbaum-Ext} and in the following we concentrate on the equivalence of the first parts of each statement (i) to (iv).

(i) $\Rightarrow$ (ii): If $M$ is Buchsbaum, then by \cite[Corollary 2.7, (a) $\Rightarrow$ (d)]{Yanagawa} we have
\[
H^{i}_{\mathfrak{m}}(M)=H^{i}_{\mathfrak{m}}(M)_0 \text{ for any } i=0,\dots,d-1
\]
and thus $\mathfrak{m}^{1}H^{i}_{\mathfrak{m}}(M)=0$ by Lemma \ref{lemma:supporthelper} (ii). Hence, $M$ is $1$-Buchsbaum.

(ii) $\Rightarrow$ (i):
If (ii) holds, then by Corollary \ref{cor:localcoh-sq} and Lemma \ref{lemma:supporthelper} (ii)
we have that 
\[
H^{i}_{\mathfrak{m}}(M)=H^{i}_{\mathfrak{m}}(M)_0 \text{ for any } i=0,\dots,d-1.
\]
Then (i) follows from \cite[Corollary 2.7, (d) $\Rightarrow$ (a)]{Yanagawa}.

(ii) $\Rightarrow$ (iii), (iii) $\Rightarrow$ (iv): These implications are trivially true by definitions.

(iv) $\Rightarrow$ (ii): Assume that $M$ is $k$-Buchsbaum for some $k\geq 1$.
Next, we assume that for some $i\in \{0,\dots,d-1\}$ there exists a vector $\textbf{b}\in -\mathbb{N}^n$
such that 
\[
H^{i}_{\mathfrak{m}}(M)_{\textbf{b}} \neq 0.
\]
Choose $j\in {\rm supp}(\textbf{b})$. Then it follows that
$H^{i}_{\mathfrak{m}}(M)_{\textbf{b}-k \varepsilon_j} \neq 0,$
by Corollary \ref{cor:localcoh-sq}. This yields the contradiction
\[
\mathfrak{m}^{k}
H^{i}_{\mathfrak{m}}(M)_{\textbf{b}-k \varepsilon_j} \neq 0.
\]
Hence, $H^{i}_{\mathfrak{m}}(M)=H^{i}_{\mathfrak{m}}(M)_0$ and (ii) follows from 
Lemma \ref{lemma:supporthelper} (ii).

\end{proof}

\section{Buchsbaumness for Levi graphs of line arrangements}

In the main results of this section we study in particular the relationship of various ring properties considered so far for edge rings of $d$-arrangements. Our proof is based on three steps. First of all, we are going to use a result due to Herzog and Hibi which provides a combinatorial description of edge ideals associated with bipartite graphs, and a result due to Cook II and Nagel about the equivalence of Cohen-Macaulay and Buchsbaum algebras associated with bipartite graphs. We sum up these results in the forthcoming theorem.
\begin{theorem}
Let $G$ be a bipartite graph with the partition $V_{1} = \{x_{1}, ..., x_{n}\}$ and $V_{2} = \{y_{1}, ..., y_{n'}\}$. Then the following conditions for $S/ I(G)$ are equivalent:
\begin{itemize}
    \item[(i)]  $S/ I(G)$ is Cohen-Macaulay;
    \item[(ii)] $S/ I(G)$ is Buchsbaum for non-complete bipartite graph $G$;
    \item[(iii)] $S/ I(G)$ is $k$-Buchsbaum for some $k\geq 0$;
    \item[(iv)]  $n=n'$ and there exists a re-ordering of the sets of vertices $V_{1},V_{2}$ such that
\begin{itemize}
    \item[a)] $x_{i}y_{i} \in E$ for all $i$,
    \item[b)] if $x_{i}y_{j} \in E$, then $i\leq j$,
    \item[c)] if $x_{i}y_{j}$ and $x_{j}y_{k}$ are in $E$, then $x_{i}y_{k} \in E$;
\end{itemize}
\item[(v)] $G$ has a cross-free pure order.
\end{itemize}
\end{theorem}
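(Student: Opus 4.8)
The plan is to assemble the five-fold equivalence from three independent inputs and to run the reasoning along the cycle $(\mathrm{i}) \Rightarrow (\mathrm{ii}) \Rightarrow (\mathrm{iii}) \Rightarrow (\mathrm{i})$, treating the purely combinatorial identifications $(\mathrm{i}) \Leftrightarrow (\mathrm{iv}) \Leftrightarrow (\mathrm{v})$ separately. The three inputs are the Herzog--Hibi classification of Cohen--Macaulay bipartite edge rings, the Cook~II--Nagel comparison of the Buchsbaum and Cohen--Macaulay properties for such rings, and our own Theorem~\ref{thm:buchsbaum-characterizations} on squarefree modules, which is what lets us collapse the whole hierarchy of $k$-Buchsbaum conditions into a single one.

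First I would dispose of the combinatorial block. The equivalence $(\mathrm{i}) \Leftrightarrow (\mathrm{iv})$ is precisely the Herzog--Hibi theorem: the edge ring of a bipartite graph is Cohen--Macaulay exactly when the two colour classes have the same size $n = n'$ and the incidences can be relabelled so that the relation ``$x_{i}$ adjacent to $y_{j}$'' becomes a partial order on $\{1,\dots,n\}$. Condition a) encodes reflexivity, b) encodes compatibility with the ambient linear order (hence antisymmetry), and c) encodes transitivity, so (iv) simply says that $G$ is the incidence graph of a finite poset. Unwinding the definition of a \emph{cross-free pure order} --- where ``pure'' records the diagonal-plus-upper-triangular shape of a) and b), and ``cross-free'' forbids the crossing pattern $x_{i}y_{j},\, x_{j}y_{k} \in E$ with $x_{i}y_{k} \notin E$ ruled out by c) --- identifies (iv) and (v) as two descriptions of the same data, giving $(\mathrm{iv}) \Leftrightarrow (\mathrm{v})$.

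Then comes the homological block. For $(\mathrm{i}) \Rightarrow (\mathrm{ii})$ I would note that a Cohen--Macaulay ring is trivially Buchsbaum, so under the non-completeness hypothesis carried by (ii) there is nothing further to check. The key converse $(\mathrm{ii}) \Rightarrow (\mathrm{i})$ is exactly the Cook~II--Nagel result: for a non-complete bipartite graph, Buchsbaumness of $S/I(G)$ already forces Cohen--Macaulayness. Finally $(\mathrm{ii}) \Leftrightarrow (\mathrm{iii})$ is where Theorem~\ref{thm:buchsbaum-characterizations} enters: the Stanley--Reisner ring $S/I(G) = \Bbbk[\triangle_{G}]$ is a squarefree $S$-module, so that theorem makes ``Buchsbaum'', ``$1$-Buchsbaum'', and ``$k$-Buchsbaum for some $k \ge 1$'' mutually equivalent; adjoining the value $k = 0$, which is Cohen--Macaulayness and hence a special case of Buchsbaumness, closes the chain onto (iii).

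I expect the genuine obstacle to be the single implication $(\mathrm{ii}) \Rightarrow (\mathrm{i})$, the deduction of Cohen--Macaulayness from Buchsbaumness for non-complete bipartite graphs: this is a theorem with real content rather than a formal step, and it is precisely where completeness must be excluded, since a complete bipartite graph $K_{m,n}$ with $m,n \ge 2$ has a disconnected independence complex and is therefore Buchsbaum but not Cohen--Macaulay. A secondary, bookkeeping-level difficulty is the quantifier ``$k \ge 0$'' in (iii): one must keep the Cohen--Macaulay case $k = 0$ separate from the properly Buchsbaum case $k \ge 1$, and ensure that the complete-bipartite exception is excluded consistently in both (ii) and (iii), so that all five conditions coincide on the nose.
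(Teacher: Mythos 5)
Your overall architecture coincides with the paper's own proof: $(\mathrm{i}) \Leftrightarrow (\mathrm{iv})$ is cited to Herzog--Hibi, $(\mathrm{i}) \Leftrightarrow (\mathrm{ii})$ --- in particular the one implication with real content, $(\mathrm{ii}) \Rightarrow (\mathrm{i})$ for non-complete bipartite graphs --- is cited to Cook~II--Nagel, and the new step $(\mathrm{ii}) \Leftrightarrow (\mathrm{iii})$ is obtained by applying Theorem \ref{thm:buchsbaum-characterizations} to the squarefree module $S/I(G)$, with $k=0$ folded in as the Cohen--Macaulay case. Your bookkeeping here is in fact slightly more careful than the paper's one-liner: you correctly isolate the relevant equivalence (Buchsbaum $\Leftrightarrow$ $k$-Buchsbaum for \emph{some} $k\geq 1$) and you correctly flag that the complete bipartite graphs $K_{m,n}$ with $m,n\geq 2$, whose independence complex is a disjoint union of two simplices and hence Buchsbaum but not Cohen--Macaulay, must be excluded consistently in (ii) and (iii).

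There is, however, one genuine gap: your treatment of $(\mathrm{iv}) \Leftrightarrow (\mathrm{v})$. You declare (iv) and (v) to be ``two descriptions of the same data'' by identifying ``cross-free'' with the transitivity condition c), forbidding the pattern $x_iy_j, x_jy_k \in E$ with $x_iy_k \notin E$. That is not the definition. Per the Remark following the theorem, a \emph{cross} of a pure order is a pair of edges $x_iy_j$ and $x_jy_i$ with $i \neq j$ --- a symmetric pair of incidences, not a failure of transitivity --- so your reading silently redefines (v) to be literally (iv) and renders the equivalence circular. With the correct definition, only one direction is easy (an order satisfying a)--c) is pure, and condition b) rules out crosses since $x_iy_j, x_jy_i \in E$ would force $i \leq j \leq i$); the converse $(\mathrm{v}) \Rightarrow (\mathrm{iv})$ requires manufacturing, from a cross-free pure order, a relabelling that additionally satisfies the transitivity condition c). That is a theorem of Cook~II--Nagel, which is exactly why the paper cites \cite{CookNagel} for $(\mathrm{iv}) \Leftrightarrow (\mathrm{v})$ rather than treating it as a definition chase. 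To repair your proof, either cite that result as the paper does or supply the combinatorial argument; as written, this step fails.
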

\begin{remark}
We say that the partitioning and ordering of vertices in $G$ satisfying $a)$ and $b)$ in $(iv)$ above is a \emph{pure order} of $G$. Furthermore, we say that a pure order has a \emph{cross} if, for some $i \neq j$, $x_{i}y_{j}$ and $x_{j}y_{i}$ are edges of $G$, otherwise we say the order is cross-free.
\end{remark}
\begin{proof}
(i) $\Leftrightarrow$ (ii) follows from \cite{CookNagel}, (i) $\Leftrightarrow$ (iv) is obtained from \cite{HerzogHibi}, and (iv) $\Leftrightarrow$ (v) proven again in \cite{CookNagel}. We need to show (ii) $\Leftrightarrow$ (iii), but this follows from the fact that $S/I(G)$ is squarefree and then we can apply Theorem  \ref{thm:buchsbaum-characterizations} (ii) $\Leftrightarrow$ (iii).
\end{proof}
\begin{lemma}
In the setting of the above characterization, a necessary condition that the edge ideal determined by the Levi graph $G$ of a line arrangement $\mathcal{L}$ is Cohen-Macaulay is $$s = \sum_{r\geq 2} t_{r} = k.$$
\end{lemma}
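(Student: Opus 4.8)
The plan is to read the conclusion off directly from the equivalence (i) $\Leftrightarrow$ (iv) of the preceding theorem. First I would observe that the Levi graph $G$ of a line arrangement $\mathcal{L}$ is, by its very construction, a bipartite graph whose two parts are exactly $V_{1} = \{x_{1}, \ldots, x_{s}\}$, the vertices indexing the intersection points of $\mathcal{L}$, and $V_{2} = \{y_{1}, \ldots, y_{k}\}$, the vertices indexing the lines. Thus, matching this with the notation of the theorem, we have $n = |V_{1}| = s = \sum_{r \geq 2} t_{r}$ and $n' = |V_{2}| = k$, and the edge ideal $I(G)$ lives in $S = \Bbbk[x_{1}, \ldots, x_{s}, y_{1}, \ldots, y_{k}]$ with respect to precisely this bipartition.

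Next I would invoke the hypothesis that $S/I(G)$ is Cohen-Macaulay, i.e.\ condition (i) of the theorem. By the established equivalence (i) $\Leftrightarrow$ (iv), condition (iv) must then hold, and its first clause asserts that $n = n'$. Substituting the cardinalities of the two parts of the Levi graph gives $s = k$, which is exactly the claimed necessary condition. The whole argument is therefore a specialization of the theorem to the geometric situation at hand, using only the direction (i) $\Rightarrow$ (iv).

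Because of this, the only point that genuinely needs checking is that the Levi graph really falls under the hypotheses of the theorem: that it is honestly bipartite with the indicated bipartition, and that this is the bipartition used in forming $I(G)$. Both are immediate from the definitions of the Levi graph and of the associated edge ideal. As a geometric sanity check I would also note that in $\mathbb{P}^{2}_{\mathbb{C}}$ any two of the $k \geq 3$ lines meet, so every line passes through an intersection point and every intersection point lies on at least two lines; hence $G$ has no isolated vertices and the bipartition is non-degenerate. The main (and essentially only) subtlety is thus bookkeeping, namely correctly identifying the abstract parameters $n, n'$ of the theorem with the geometric quantities $s$ and $k$; no further computation is required.
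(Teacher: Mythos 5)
Your proposal is correct and coincides with the paper's (implicit) argument: the lemma is stated without a separate proof precisely because, as you observe, it is the immediate specialization of condition (iv) of the preceding characterization, whose first clause $n=n'$ becomes $s=k$ under the identification $V_{1}=\{x_{1},\ldots,x_{s}\}$, $V_{2}=\{y_{1},\ldots,y_{k}\}$. Your added check that the Levi graph has no isolated vertices (every two of the $k\geq 3$ lines meet) is a sensible verification of the theorem's hypotheses and does not depart from the paper's route.
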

 A classical result due to de-Bruijn and Erd\H{o}s \cite{deBruijnErdos} provides a complete classification of such line arrangements with $s=k$.
\begin{theorem}[de-Bruijn - Erd\H{o}s]
Let $\mathcal{L} \subset \mathbb{P}^{2}_{\Bbbk}$ be an arrangement of $k\geq 3$ lines in the plane such that $t_{k}=0$, where $\Bbbk$ is an arbitrary field. Then $s \geq k$ and the equality holds if and only if $\mathcal{L}$ is either
\begin{itemize}
\item a Hirzebruch quasi-pencil consisting such that $t_{k-1}=1$ and $t_{2}=k-1$, or
\item a finite projective plane arrangement consisting of $q^{2}+q+1$ points and $q^{2}+q+1$ lines, where $q=p^{n}$ for some prime number $p \in\mathbb{Z}_{\geq 2}$.
\end{itemize}
\end{theorem}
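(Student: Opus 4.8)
The plan is to recognize the statement as the classical de Bruijn--Erd\H{o}s theorem and to prove it by projective duality combined with a Motzkin--Conway double count. First I would dualize: the $k$ lines of $\mathcal{L}$ become $k$ points $P_1^{*},\dots,P_k^{*}$ in the dual plane, the hypothesis $t_k=0$ says that no point lies on all $k$ lines and hence that the $P_i^{*}$ are \emph{not} all collinear, and an $r$-fold intersection point of $\mathcal{L}$ corresponds to a line through exactly $r$ of the $P_i^{*}$. Under this dictionary the number $s$ of intersection points equals the number $m$ of \emph{connecting lines} of the set $\{P_1^{*},\dots,P_k^{*}\}$, so the inequality $s\ge k$ becomes: $k$ non-collinear points span at least $k$ lines. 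The one combinatorial input I need is the incidence inequality $r_P\ge d_L$ valid whenever $P\notin L$, where $r_P$ is the number of connecting lines through $P$ and $d_L$ the number of points on $L$; it holds because joining $P$ to the $d_L$ points of $L$ yields $d_L$ distinct lines through $P$.

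For the inequality itself I would argue by contradiction, supposing $m<n$ (here $n=k$). Counting non-incident pairs in two ways gives the identities $\sum_{P\notin L}\frac{1}{n(m-r_P)}=1$ and $\sum_{P\notin L}\frac{1}{m(n-d_L)}=1$ (there is no vanishing denominator, since non-collinearity forbids $r_P=m$). Because $r_P\ge d_L$ and $m<n$ force $n\,r_P>m\,d_L$, each summand of the first identity strictly exceeds the corresponding summand of the second, so $1>1$, a contradiction. Hence $s\ge k$.

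For the equality case $s=k$, i.e.\ $m=n$, I would rerun the comparison: now both sums equal $1$ while $r_P\ge d_L$ gives $\frac{1}{n-r_P}\ge\frac{1}{n-d_L}$ termwise, so the identity forces $r_P=d_L$ for \emph{every} non-incident pair $(P,L)$. Fixing $P$, this means all connecting lines missing $P$ share one size $r_P$. I would then split into two cases. If some line carries all but one of the points we are in the near-pencil case, which dualizes to a Hirzebruch quasi-pencil with $t_{k-1}=1$ and $t_2=k-1$. Otherwise any two points admit a common missing line, so all the numbers $r_P$ coincide with a single value $q+1$; then every line has $q+1$ points, every point lies on $q+1$ lines, and counting point-pairs gives $n=m=q^2+q+1$, the incidence pattern of a projective plane of order $q\ge 2$.

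Finally, since the whole configuration sits inside the field plane $\mathbb{P}^2_{\Bbbk}$, the abstract plane produced in the second case is a projective subplane of the Desarguesian plane $\mathrm{PG}(2,\Bbbk)$, which forces it to be $\mathrm{PG}(2,q)$ with $q=p^{n}$ a prime power and yields the stated arrangement. The short double count settles $s\ge k$; the main obstacle is the equality analysis, where turning the pointwise relation $r_P=d_L$ into the clean dichotomy near-pencil/projective plane needs the careful case distinction above, and the reduction to prime-power order genuinely uses realizability over a field rather than mere combinatorics.
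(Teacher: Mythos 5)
The paper offers no proof of this statement at all: it is quoted as the classical de Bruijn--Erd\H{o}s theorem with a bare citation to their 1948 paper, so there is no in-paper argument to diverge from, and your proposal should be judged as a self-contained proof. As such it is essentially correct and follows the standard modern route: dualize to a point--line problem and run the Motzkin/Conway double count (the original de Bruijn--Erd\H{o}s argument was instead an induction on the number of points). Your dictionary is right ($t_{k}=0$ dualizes to non-collinearity of the $k$ points, $s$ becomes the number $m$ of connecting lines), the key inequality $r_{P}\geq d_{L}$ for $P\notin L$ is justified correctly, and the two counting identities are valid once \emph{both} denominators are checked: non-collinearity rules out $r_{P}=m$ (which you note) and also $d_{L}=n$ (which you should note as well). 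The thinnest spots are in the equality analysis, which you yourself flag: (a) in the non-near-pencil case the claim that any two points $P,Q$ admit a common missing line needs the short argument that if every connecting line met $\{P,Q\}$, then every point off the line $PQ$ would have degree $2$ and the configuration degenerates to a near-pencil; (b) to get an honest subplane of $\mathrm{PG}(2,\Bbbk)$ you must check that two connecting lines meet in a \emph{configuration} point, which follows since $r_{P}=d_{L}=q+1$ forces every line through $P$ to meet every line missing $P$ inside the set; and (c) the degenerate case $k=3$ (a triangle, formally ``order $q=1$'') should be absorbed into the near-pencil branch, where incidentally the paper's own normalization $t_{k-1}=1$, $t_{2}=k-1$ is slightly off. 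Your final step --- a finite subplane of a Desarguesian plane inherits Desargues because the subplane is closed under joins and meets, hence is $\mathrm{PG}(2,q)$ with $q=p^{n}$ by Wedderburn --- is exactly where realizability over the field enters, and your observation that the prime-power conclusion is unavailable for the purely combinatorial statement is accurate and worth keeping. Modulo filling in these standard details, the proof is sound.
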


Based on de-Bruijn and Erd\H{o}s theorem we can formulate the following.
\begin{theorem}
\label{theorema}
Let $\Bbbk$ be any field and let $\mathcal{L}\subset \mathbb{P}^{2}_{\Bbbk}$ be an arrangement of $k\geq 3$ lines with $s$ intersection points. Denote by $I(G)$ the edge ideal determined by Levi graph associated with $\mathcal{L}$. Then $\Bbbk[x_{1}, ..., x_{s},y_{1}, ..., y_{k}]/I(G)$ is never Cohen-Macaulay.
\end{theorem}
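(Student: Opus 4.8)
The plan is to reduce the statement to the combinatorial characterization of Cohen--Macaulay bipartite edge rings recalled just above, and to show that its condition (iv) can never hold for a Levi graph $G$ of a line arrangement $\mathcal{L}$. Since (iv) demands in particular that the two colour classes have equal size, namely $s=k$, I would first dispose of the case $s\neq k$, where (iv) fails at once and hence $\Bbbk[x_{1},\dots,x_{s},y_{1},\dots,y_{k}]/I(G)$ is not Cohen--Macaulay. It therefore remains to treat the balanced case $s=k$.

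The heart of the argument is an elementary degree count. By the very definition of an intersection point, each $x_{i}$ lies on at least two lines of $\mathcal{L}$, so \emph{every} point-vertex $x_{i}$ has degree $\geq 2$ in $G$. I now confront this with the re-ordering required by (iv): relabel so that $V_{1}=\{x_{1},\dots,x_{k}\}$, $V_{2}=\{y_{1},\dots,y_{k}\}$, with (a) $x_{i}y_{i}\in E$ and (b) $x_{i}y_{j}\in E\Rightarrow i\leq j$. Applying (b) to the terminal vertex $x_{k}$, any neighbour $y_{j}$ satisfies $k\leq j\leq k$, so $j=k$; thus $x_{k}$ is adjacent to $y_{k}$ alone and has degree $\leq 1$, contradicting the bound $\geq 2$. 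Should the characterization instead place the points in the $V_{2}$-role, the identical contradiction appears at the initial vertex: (b) forces any neighbour $x_{i}$ of the point-vertex $y_{1}$ to satisfy $i\leq 1$, whence $y_{1}$ again has degree $\leq 1$. Either way no admissible re-ordering exists, so (iv) fails and $S/I(G)$ is never Cohen--Macaulay.

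The argument is short, and the only points requiring care are expository: verifying that condition (iv) is genuinely equivalent to Cohen--Macaulayness over an arbitrary field $\Bbbk$ (this is precisely the Herzog--Hibi and Cook--Nagel characterization quoted above), and observing that the lower bound $\deg x_{i}\geq 2$ is unaffected by re-labelling, so that it survives the passage to the ordered form. I expect the main subtlety to lie in phrasing the conclusion so that it is manifestly independent of which colour class is designated $V_{1}$, which is why I record both the $x_{k}$ and the $y_{1}$ contradiction. A heavier but equally valid route would instead use the necessary condition $s=k$ together with the de-Bruijn--Erd\H{o}s theorem to reduce to the near-pencil and finite-projective-plane configurations and then eliminate each separately; the degree argument above has the advantage of handling all cases simultaneously and without any appeal to classification.
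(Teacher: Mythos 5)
Your proof is correct, but it takes a genuinely different route from the paper's. You never leave the combinatorics of the Herzog--Hibi criterion: you observe that conditions (a) and (b) of (iv) already force the extremal vertices of any pure order to be leaves (the last vertex of $V_{1}$ and the first vertex of $V_{2}$ each have a unique neighbour), while every point-vertex of a Levi graph has degree at least $2$ because an intersection point lies on at least two lines; since a pure order produces a leaf in \emph{both} colour classes, the contradiction lands on the point class no matter how the bipartition roles are assigned, and the unbalanced case $s\neq k$ dies immediately. The paper instead argues geometrically in two steps: from (iv) it extracts the necessary balance condition $s=k$, invokes the de Bruijn--Erd\H{o}s theorem to classify such arrangements as Hirzebruch quasi-pencils or finite projective planes, and then exhibits in each of those two configurations an explicit violation of the transitivity condition (c) --- for the projective plane even with concrete coordinates (the point $(1:1:1)$, the line $(q-1)x+z=0$, the line $y=0$). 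Your argument is shorter, uniform, and classification-free; moreover it extends verbatim to Levi graphs of $d$-arrangements with $d\geq 2$, a case the paper says it cannot reach by its Theorem~\ref{theorema} method (lacking a classification of $s=k$ there) and only settles later as a corollary of the sequentially Cohen--Macaulay result, Theorem~\ref{theoremb}; indeed your forced-leaf observation is the elementary shadow of Lemma~\ref{shell}, the degree-one lemma used in that later section. What the paper's heavier route buys is geometric content: it identifies exactly which arrangements satisfy the balance condition $s=k$ and pinpoints where condition (c) fails in them, which in turn motivates the classification problem posed for $d\geq 2$. One cosmetic remark: it is worth noting in passing that the Levi graph has no isolated vertices (each line contains an intersection point since $k\geq 3$ and any two lines meet), so the quoted bipartite characterization applies in its standard form.
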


\begin{proof}
We are going to show that both Hirzebruch quasi-pencils and finite projective plane do not satisfy Herzog-Hibi's criterion. We start with the case when $\mathcal{L} = \{\ell_{1}, ..., \ell_{k}\}$ is a quasi-pencil.
Assume that $V_{1} = \{x_{1}, ..., x_{k}\}$ corresponds to the set of intersection points of $\mathcal{L}$ and $V_{2} = \{y_{1}, ..., y_{k}\}$ corresponds to the set of lines in $\mathcal{L}$. Take any ordering satisfying the first two conditions in Herzog-Hibi's criterion. Without lost of generality, we may assume the following conditions (up to relabelling of vertices):
\begin{itemize}
    \item $x_{1}$ corresponds to the point $P_{1}$ of multiplicity $d-1$;
    \item $y_{1}, ..., y_{k-1}$ are the elements corresponding lines $\ell_{1}, ..., \ell_{k-1}$ passing through the point $P_{1}$ and $P_{1} \not\in \ell_{k}$;
    \item $y_{k}$ corresponds to $\ell_{k}$;
    \item $P_{i}$ is a double intersection point and $x_{i}$ is corresponding element to $P_{i}$;
    \item $y_{i}, y_{k}$ are the elements corresponding to lines intersecting at $P_{i}$.
\end{itemize}
Then obviously $x_{1}y_{i}$ and $x_{i}y_{k}$ are the edges of $G$, but $x_{1}y_{k}$ does not correspond to any edge in the Levi graph $G$ since $P_{1}$ is not incident with line $\ell_{k}$. This shows that the associated algebra cannot be Cohen-Macaulay. 

Next we consider the case of a finite projective plane $\mathcal{F}_{q}$ which has exactly $q^{2}+q+1$ points and $q^{2}+q+1$ lines. 
Assume that $V_{1} = \{x_{1}, ..., x_{q^{2}+q+1}\}$ corresponds to the set of intersection points of $\mathcal{L}$ and $V_{2} = \{y_{1}, ..., y_{q^{2}+q+1}\}$ corresponds to the set of lines in the arrangement. Take any ordering satisfying the first two conditions in Herzog-Hibi's criterion. We are going to show that the third condition of the mentioned criterion is not satisfied. To this end, consider the point $(1:1:1)$ and  we denote by $x_{i}$ the corresponding element. Then take the line $(q-1)x + z = 0$ and the corresponding element $y_{j}$, the point $(q-1:0:1)$ and the corresponding element $x_{j}$, and we take finally the line $y=0$ with the corresponding element $y_{k}$. Observe that $x_{i}y_{j}$ and $x_{j}y_{k}$ are the edges of the Levi graph $G$, but $x_{i}y_{k}$ is not any edge of $G$ since the point $(1,1,1)$ is not incident with the line $y=0$. This concludes the proof.
\end{proof}
\begin{example}
Unfortunately, we do not know how to extend the above result to the case of any arbitrary $d$-arrangement with $d\geq 2$. One can show that if $\mathcal{C}$ is a $d$-arrangement with $d\geq 2$, $k\geq 3$ and $t_{k}=0$, then by \cite[Lemma 4.3]{PSZR} we have
$$s = \sum_{r\geq 2} t_{r} \geq k,$$
but we do not have a global description of $d$-arrangements with $s=k$.  If we restrict our attention to $2$-arrangements, we can easily construct an arrangement consisting of $6$ conics and $6$ intersection points of multiplicity $5$ -- just take $6$ general points and all smooth conics determined by subsets consisting of $5$ distinct points. We call such a configuration  a \emph{$(6_{5},6_{5})$-symmetric point-conic configuration}. Let us consider the edge ideal associated with the above arrangement
$$I(G) = \langle x_{1}y_{1}, x_{1}y_{2}, x_{1}y_{3}, x_{1}y_{4}, x_{1}y_{5}, x_{2}y_{1},x_{2}y_{2},x_{2}y_{3}, x_{2}y_{4},x_{2}y_{6},$$
$$ x_{3}y_{1},x_{3}y_{2},x_{3}y_{3},x_{3}y_{5},x_{3}y_{6}, x_{4}y_{1},x_{4}y_{2},x_{4}y_{4},x_{4}y_{5},x_{4}y_{6},$$
$$x_{5}y_{1},x_{5}y_{3},x_{5}y_{4},x_{5}y_{5},x_{5}y_{6}, x_{6}y_{2},x_{6}y_{3},x_{6}y_{4},x_{6}y_{5},x_{6}y_{6} \,\rangle.$$
Using \verb{Singular{ one can compute firstly the minimal graded free resolution of the algebra $$\Bbbk[x_{1}, ..., x_{6}, y_{1}, ..., y_{6}]/I(G).$$

The Betti diagram has the following form:
\begin{center}
\begin{verbatim}
           0     1     2     3     4     5     6     7     8     9    10
----------------------------------------------------------------------------
    0:     1     -     -     -     -     -     -     -     -     -     -
    1:     -    30   120   210   180    62     -     -     -     -     -
    2:     -     -    15   120   400   720   765   500   204    48     5
----------------------------------------------------------------------------
 total:    1    30   135   330   580   782   765   500   204    48     5

\end{verbatim}
\end{center}
Based on this we can conclude that the regularity of the algebra is equal to $2$, and the projective dimension is equal to $10$. Moreover, we check directly that the algebra  is not Cohen-Macaulay. 
\end{example}
\begin{problem}
Classify all $d$-arrangements with $d\geq 2$ such that $s=k$.
\end{problem}
On the other hand, it is natural to ask whether there exists a $d$-arrangement with $d\geq 2$ such that its algebra is Cohen-Macaulay. We will come back to this question in the forthcoming section -- it turns out that such a $d$-arrangement does not exist.

\section{Sequentially Cohen-Macaulay algebras and $d$-arrangements}
As we saw in the previous sections, edge ideals associated with line arrangements are neither Cohen-Macaulay nor $k$-Buchsbaum for some $k\geq 1$. It is natural to wonder how this situation looks like if we focus on generalizations of Cohen-Macaulay rings. Here we focus on sequentially Cohen-Macaulay rings. 
\begin{definition}
Let $S = \Bbbk[x_{1}, ..., x_{n}]$. A graded $S$-module $M$ is called \emph{sequentially Cohen-Macaulay} over $\Bbbk$ if there exists a finite filtration of graded $S$-modules
$$0=M_{0} \subset M_{1} \subset ... \subset M_{r} = M$$
such that each $M_{i}/M_{i-1}$ is Cohen-Macaulay and the Krull dimensions of the quotients are increasing
$${\rm dim}\, (M_{1}/M_{0}) < {\rm dim}(M_{2}/M_{1}) < ... < {\rm dim}(M_{r}/M_{r-1}).$$
\end{definition}
\begin{definition}
Let $G$ be a graph whose independence complex is $\triangle_{G}$. 
\begin{enumerate}
\item[(i)]
We say that $G$ is \emph{shellable graph} if $\triangle_{G}$ is a shellable simplicial complex.
\item[(ii)]
The graph $G$ is called \emph{sequentially Cohen-Macaulay} if the algebra $S/I(G)$ is sequentially Cohen-Macaulay.
\end{enumerate}
\end{definition}
In this section we are going to provide a complete classification of sequentially Cohen-Macaulay algebras associated with edge ideals of Levi graphs. In order to do so, we need the following two results which come from a paper by van Tuyl and Villarreal \cite{vaTuylVillarreal}.
\begin{lemma}
\label{shell}
Let $G$ be a bipartite graph with bipartition $\{x_{1},..., x_{m}\}$, $\{y_{1},..., y_{n}\}$. If $G$
is shellable and $G$ has no isolated vertices, then there is $v \in V(G)$ with ${\rm deg}(v) = 1$.
\end{lemma}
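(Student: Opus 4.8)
The plan is to exploit the fact that, since $G$ has no isolated vertices, each of the two sides $X=\{x_1,\dots,x_m\}$ and $Y=\{y_1,\dots,y_n\}$ of the bipartition is itself a maximal independent set of $G$, hence a facet of $\triangle_G$. Indeed, $X$ contains no edge by bipartiteness, and every $y_j$ has a neighbor, which necessarily lies in $X$, so $X$ cannot be enlarged; symmetrically for $Y$. These two facets are distinct and disjoint, so at least one of them is not the first facet in the given shelling order. After possibly interchanging the names of the two sides, I may therefore assume $Y=F_p$ with $p\ge 2$.

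The main tool is the shelling condition of Bj\"orner--Wachs in its non-pure form: in a shelling $F_1,\dots,F_t$, for every $i\ge 2$ the subcomplex $\big(\bigcup_{j<i}\langle F_j\rangle\big)\cap\langle F_i\rangle$ is pure of dimension $\dim F_i-1$, where $\langle F\rangle$ denotes the full simplex on the vertex set $F$. Applied to $F_i=Y$, this produces a vertex $y_a\in Y$ together with an earlier facet $F_j$ satisfying $Y\cap F_j = Y\setminus\{y_a\}$. Thus $F_j$ is a maximal independent set that contains every $y_b$ with $b\neq a$ but does not contain $y_a$.

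The next step is to show that $F_j$ must contain an $x$-vertex. If $F_j$ were equal to $Y\setminus\{y_a\}$, it would fail to be maximal, since $y_a$ is non-adjacent to all of $Y\setminus\{y_a\}$ (there are no edges inside $Y$) and could be added back. Hence $F_j\supsetneq Y\setminus\{y_a\}$, and because $y_a\notin F_j$ the extra vertices lie in $X$; pick one of them, say $x_i\in F_j$. Since $F_j$ is independent and contains $y_b$ for all $b\neq a$, the vertex $x_i$ is non-adjacent to every $y_b$ with $b\neq a$; as $x_i$ is not isolated and all its neighbors lie in $Y$, it must be adjacent to $y_a$ and to nothing else. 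Therefore $\deg(x_i)=1$, giving the desired vertex of degree one. Had $X$ been the later facet instead, the identical argument would produce a leaf among the $y$'s.

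The step I expect to require the most care is invoking the shelling axiom correctly and extracting the codimension-one overlap $Y\cap F_j=Y\setminus\{y_a\}$: one must observe that $Y\cap F_j$ cannot equal $Y$, because two distinct facets are incomparable under inclusion, so the face $Y\setminus\{y_a\}$ guaranteed by purity is in fact the \emph{whole} intersection. The remainder is a short combinatorial deduction, with only the degenerate cases $m=1$ or $n=1$ to inspect separately, in which $G$ is a star and leaves are immediate.
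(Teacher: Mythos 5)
Your proof is correct and complete, including your separate treatment of the degenerate cases $m=1$ or $n=1$. Note that the paper itself supplies no proof of this lemma — it is imported from van Tuyl and Villarreal \cite{vaTuylVillarreal} — and your argument is essentially the original one from that source: both sides of the bipartition are maximal independent sets and hence facets of $\triangle_{G}$; applying the nonpure (Bj\"orner--Wachs) shelling condition to whichever of the two is not the first facet yields an earlier facet $F_{j}$ with $Y\cap F_{j}=Y\setminus\{y_{a}\}$, and maximality of $F_{j}$ then forces a vertex of the opposite side whose only neighbor is $y_{a}$, i.e., a vertex of degree one.
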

\begin{theorem}
Let $G$ be a bipartite graph. Then $G$ is shellable if and only if $G$ is sequentially Cohen-Macaulay.
\end{theorem}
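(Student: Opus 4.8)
The plan is to separate the two implications and localise the use of the bipartite hypothesis. First I would remove isolated vertices: each isolated vertex of $G$ is a cone point of the independence complex $\triangle_G$, and coning simultaneously preserves shellability, vertex decomposability, and the sequentially Cohen-Macaulay property, so we may assume $G$ has no isolated vertices. The implication \emph{shellable $\Rightarrow$ sequentially Cohen-Macaulay} then needs no bipartiteness whatsoever: it is the non-pure shellability theorem of Bj\"orner and Wachs, that every shellable (not necessarily pure) complex is sequentially Cohen-Macaulay over any field. Thus the whole content of the statement sits in the converse.

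For \emph{sequentially Cohen-Macaulay $\Rightarrow$ shellable} I would induct on $|V(G)|$ and prove the formally stronger conclusion that $\triangle_G$ is \emph{vertex decomposable}, using that vertex decomposable $\Rightarrow$ shellable (again Bj\"orner-Wachs). The engine of the induction is a leaf, i.e. the sequentially Cohen-Macaulay analogue of Lemma \ref{shell}: a sequentially Cohen-Macaulay bipartite graph with no isolated vertices has a vertex $v$ of degree one. Let $u$ be its unique neighbour, and take $u$ as the shedding vertex. Its deletion and link in $\triangle_G$ are again independence complexes of smaller bipartite graphs, namely $\mathrm{del}_{\triangle_G}(u)=\triangle_{G\setminus u}$ and $\mathrm{lk}_{\triangle_G}(u)=\triangle_{G\setminus N[u]}$.

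Both blocks feed straight into the induction. Links of sequentially Cohen-Macaulay complexes are again sequentially Cohen-Macaulay (via Duval's pure-skeleton criterion together with Reisner's criterion for links), so $\mathrm{lk}_{\triangle_G}(u)=\triangle_{G\setminus N[u]}$ and $\mathrm{lk}_{\triangle_G}(v)=\triangle_{G\setminus\{u,v\}}$ are sequentially Cohen-Macaulay on strictly smaller bipartite graphs, hence vertex decomposable by the inductive hypothesis. Now in $G\setminus u$ the leaf $v$ has lost its only neighbour and becomes isolated, so $\triangle_{G\setminus u}$ is the cone with apex $v$ over $\triangle_{G\setminus\{u,v\}}$; as a cone is vertex decomposable exactly when its base is, $\mathrm{del}_{\triangle_G}(u)$ is vertex decomposable too. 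The shedding condition for $u$ is then automatic: $v$ lies in every facet of the cone $\mathrm{del}_{\triangle_G}(u)$ but in no face of $\mathrm{lk}_{\triangle_G}(u)$ (since $v\in N[u]$), so no facet of the link can be a facet of the deletion. Hence $\triangle_G$ is vertex decomposable, and the induction closes.

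The hard part is exactly the leaf lemma on the sequentially Cohen-Macaulay side, and this is where bipartiteness is indispensable; once a leaf is in hand the rest of the recursion, including the shedding condition, is forced. That some such hypothesis cannot be dropped is already visible for the $4$-cycle $C_4$, which is bipartite with every vertex of degree two: its independence complex is a pair of disjoint edges, which is pure but disconnected, hence neither Cohen-Macaulay nor shellable, and the lack of a leaf is precisely what stalls the recursion. A secondary point worth flagging is that sequential Cohen-Macaulayness is \emph{not} inherited by arbitrary vertex deletions, so the asymmetric roles of $v$ and $u$ above --- using the leaf $v$ to control the links and its neighbour $u$ to turn the deletion into a cone --- are essential rather than cosmetic. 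For the record, the statement itself is due to van Tuyl and Villarreal \cite{vaTuylVillarreal}.
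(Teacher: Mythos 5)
You should first be aware that the paper contains no proof of this statement at all: it is imported verbatim from van Tuyl and Villarreal \cite{vaTuylVillarreal}, immediately after Lemma \ref{shell}, as one of two quoted ingredients for Theorem \ref{theoremb}. So your attempt is really being measured against the original argument. Your reduction machinery is correct, and in fact it reconstructs the stronger vertex-decomposability route of van Tuyl's later paper \cite{vanTuyl1} rather than the shelling-order recursion of \cite{vaTuylVillarreal}: the isolated-vertex/cone reduction, the direction shellable $\Rightarrow$ sequentially Cohen-Macaulay via Bj\"orner--Wachs and Stanley (indeed needing no bipartiteness), the identifications $\mathrm{del}_{\triangle_G}(u)=\triangle_{G\setminus u}$, $\mathrm{lk}_{\triangle_G}(u)=\triangle_{G\setminus N[u]}$ and $\mathrm{lk}_{\triangle_G}(v)=\triangle_{G\setminus\{u,v\}}$, the observation that $\triangle_{G\setminus u}$ is a cone with apex $v$ over $\mathrm{lk}_{\triangle_G}(v)$, and the resulting automatic shedding condition for $u$ are all sound, as is your (correct and necessary) caution that sequential Cohen-Macaulayness does not pass to arbitrary vertex deletions.

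The genuine gap is the one you yourself flag and then skip: the leaf lemma for \emph{sequentially Cohen-Macaulay} bipartite graphs is asserted, never proved, and it is the entire content of the hard implication --- the only place in your argument where the hypothesis ``sequentially Cohen-Macaulay'' and the hypothesis ``bipartite'' actually interact. You cannot obtain it from Lemma \ref{shell} of this paper, which assumes $G$ is \emph{shellable}: since shellable implies sequentially Cohen-Macaulay, the lemma you need is strictly stronger than Lemma \ref{shell}, and invoking shellability to produce the leaf would be circular, as shellability of $G$ is precisely the conclusion of your induction. In \cite{vaTuylVillarreal} this leaf lemma is a separate theorem with a substantive proof resting on the algebraic characterization of the sequentially Cohen-Macaulay property (not on the simplicial bookkeeping that makes up the rest of your sketch), and nothing in your proposal substitutes for it. As written, you have proved only the conditional statement: \emph{if} every sequentially Cohen-Macaulay bipartite graph without isolated vertices has a vertex of degree one, \emph{then} sequentially Cohen-Macaulay bipartite graphs are vertex decomposable, hence shellable. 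Labelling the missing lemma ``the hard part'' is honest bookkeeping, but it leaves the proof incomplete at exactly its crux.
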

Now we are ready to provide our next classification and main result of our work.
\begin{theorem}
\label{theoremb}
Let $\mathcal{C}$ be a $d$-arrangement of $k\geq 3$ curves in $\mathbb{P}^{2}_{\mathbb{C}}$ with $s$ intersection points and let $I(G)$ be the associated edge ideal determined by the Levi graph of $\mathcal{C}$. Then the following conditions are equivalent:
\begin{itemize}
    \item[(i)] $\Bbbk[x_{1}, ..., x_{s},y_{1}, ..., y_{k}]/I(G)$ is sequentially Cohen-Macaulay;
    \item[(ii)] $\mathcal{C}$ is a pencil of $k$ lines in the plane.
\end{itemize}
\end{theorem}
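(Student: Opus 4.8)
The plan is to reduce the statement to a purely graph-theoretic condition on the Levi graph $G$ and then to translate that condition back into the geometry of the arrangement. Since $G$ is bipartite by construction, the theorem of van Tuyl and Villarreal recalled above \cite{vaTuylVillarreal} tells us that $\Bbbk[x_{1},\dots,x_{s},y_{1},\dots,y_{k}]/I(G)$ is sequentially Cohen--Macaulay if and only if $G$ is shellable, i.e. if and only if the independence complex $\triangle_{G}$ admits a (nonpure) shelling. It therefore suffices to prove that $G$ is shellable if and only if $\mathcal{C}$ is a pencil of $k$ lines.

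For the implication (ii) $\Rightarrow$ (i), I would first observe that a pencil of $k$ lines has a single intersection point $P$, the common point, so $s=1$ and the Levi graph is the star $K_{1,k}$ with one point-vertex $x_{1}$ adjacent to all curve-vertices $y_{1},\dots,y_{k}$. Its independence complex $\triangle_{G}$ has exactly two facets, $F_{1}=\{y_{1},\dots,y_{k}\}$ and $F_{2}=\{x_{1}\}$, and I would exhibit the ordering $F_{1},F_{2}$ as a shelling: the intersection $\langle F_{2}\rangle\cap\langle F_{1}\rangle$ consists only of the empty face, hence is pure of dimension $\dim F_{2}-1=-1$, which is precisely the Bj\"orner--Wachs shelling condition. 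The reverse ordering fails for $k\ge 3$, so placing the top-dimensional facet first is essential. Shellability of $\triangle_{G}$ then yields (i).

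The heart of the matter is (i) $\Rightarrow$ (ii), where I would use Lemma \ref{shell}. Since $k\ge 3$, any two of the degree-$d$ curves meet by B\'ezout, so every curve-vertex has degree $\ge 1$; and every point-vertex, being an intersection point, lies on at least two curves and thus has degree $\ge 2$. Hence $G$ has no isolated vertices, and Lemma \ref{shell} produces a vertex of degree $1$. This vertex cannot be a point-vertex, so it is a curve-vertex $y_{i}$, and the corresponding curve $C_{i}$ passes through exactly one intersection point $P$. Consequently every other curve $C_{j}$ meets $C_{i}$ only at $P$, so set-theoretically $C_{i}\cap C_{j}=\{P\}$ for all $j\neq i$.

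Now I would invoke the two defining features of a $d$-arrangement. Because the singular points are ordinary, the local intersection multiplicity of $C_{i}$ and $C_{j}$ at $P$ equals $1$, while B\'ezout gives $C_{i}\cdot C_{j}=d^{2}$; since $P$ is the only point of $C_{i}\cap C_{j}$, this forces $d^{2}=1$, i.e. $d=1$, so $\mathcal{C}$ is a line arrangement. Moreover each $C_{j}$ passes through $P$, so all $k$ lines are concurrent at $P$, and as two distinct lines meet in a single point, $P$ is the unique intersection point and $\mathcal{C}$ is a pencil of $k$ lines, which is (ii). The step I expect to be the crux is exactly this geometric translation: extracting, from the combinatorial degree-$1$ vertex supplied by Lemma \ref{shell}, the forced collapse $d=1$ together with the concurrency of all lines. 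The bipartite reduction and the shelling in (ii) $\Rightarrow$ (i) are by comparison routine.
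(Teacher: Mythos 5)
Your proposal is correct, and it shares the paper's overall skeleton: both directions reduce, via the van Tuyl--Villarreal equivalence for bipartite graphs, to shellability of the Levi graph, and both hinge on Lemma \ref{shell}. The difference lies in how the crucial implication (i) $\Rightarrow$ (ii) is executed. The paper argues contrapositively: assuming $\mathcal{C}$ is not a pencil (a line arrangement with $t_{k}=0$, or any $d$-arrangement with $d\geq 2$), it uses the combinatorial count \eqref{equ2} to show that every vertex of $G$ has degree at least $2$ --- in two separate cases, $\deg(y_{j})\geq 2$ for $d=1$ and $\deg(y_{j})\geq d^{2}$ for $d\geq 2$ --- so that Lemma \ref{shell} rules out shellability. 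You run the implication forwards instead: shellability produces a degree-one vertex, which cannot be a point-vertex and hence is a curve-vertex $y_{i}$; then B\'ezout together with the ordinariness of the singular points (smooth branches meeting transversally, so local intersection multiplicity $1$ at the unique intersection point $P$ on $C_{i}$) forces $d^{2}=1$ and the concurrency of all curves at $P$. Your route is arguably cleaner: it treats $d=1$ and $d\geq 2$ uniformly, bypasses the count \eqref{equ2}, and extracts the full geometric conclusion in one stroke; the paper's route, in exchange, isolates the standalone fact that Levi graphs of non-pencil $d$-arrangements have minimum degree at least $2$, which it reuses later (e.g.\ in the matching argument behind Corollary \ref{regupp}). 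Your explicit Bj\"orner--Wachs shelling of the independence complex of $K_{1,k}$ in (ii) $\Rightarrow$ (i), including the observation that the facet order matters for $k\geq 3$, is also more detailed than the paper's one-line assertion that $K_{1,k}$ is shellable; both are correct.
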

\begin{proof}
First of all, it is clear that if $\mathcal{L}$ is a pencil of $k$ lines, then the associated Levi graph is a tree with one root $x_{1}$ corresponding to the $d$-fold point $P$ and $k$ leafs $y_{1}, ..., y_{k}$ corresponding to lines passing through $P$. Since the graph $K_{1,k}$ is shellable, then the associated monomial algebra $\Bbbk[x_{1},y_{1}, ...,y_{n}]/I(G)$ is sequentially Cohen-Macaulay. 

From now on we assume that $\mathcal{C}$ is a line arrangement with $t_{k}=0$ or a $d$-arrangement with $d\geq 2$ and assume that $\Bbbk[x_{1}, ..., x_{s},y_{1}, ..., y_{k}]/I(G)$ is sequentially Cohen-Macaulay.

We are going to apply Lemma \ref{shell}, i.e., our aim is to show that in that setting the Levi graph of $\mathcal{C}$ has vertices of degree greater or equal to $2$, so $G$ is not shellable. We assume that $V_{1} = \{x_{1}, ..., x_{s}\}$ corresponds to the intersection points of $\mathcal{C}$ and $V_{2} = \{y_{1}, ..., y_{k}\}$ corresponds to the curves in $\mathcal{C}$.
If $\mathcal{C}$ is a line arrangement with $t_{k}=0$, then each intersection point has multiplicity at least $2$, so ${\rm deg}(x_{i}) \geq 2$. Since $t_{k} = 0$, then by using (\ref{equ2}) with $d=1$ we see that on each line we have at least $2$ intersection points from ${\rm Sing}(\mathcal{C})$ which means that ${\rm deg}(y_{j}) \geq 2$, and it completes the proof for case of line arrangements.

Suppose that $\mathcal{C}$ is a $d$-arrangement with $d\geq 2$. Then each intersection point has multiplicity at least $2$ and this gives ${\rm deg}(x_{i}) \geq 2$. Moreover, using the combinatorial count (\ref{equ2}), we can observe that ${\rm deg}(y_{j}) \geq d^{2}$. This completes the proof.
\end{proof}
As a corollary, we obtain the following classification result.
\begin{corollary}
Let $\mathcal{C}\subset \mathbb{P}^{2}_{\mathbb{C}}$ be a $d$-arrangement of $k\geq 3$ curves with $d\geq 2$ and $s$ intersection points. Denote by $I(G)$ the edge ideal determined by Levi graph associated with $\mathcal{C}$. Then $\Bbbk[x_{1}, ..., x_{s},y_{1}, ..., y_{k}]/I(G)$ is never Cohen-Macaulay.
\end{corollary}
\begin{proof}
Let us recall that for algebras being Cohen-Macaulay implies being sequentially Cohen-Macaulay. Since for $d$-arrangements with $d\geq 2$ the associated Levi graphs are never shellable, then the associated algebras $\Bbbk[x_{1}, ..., x_{s},y_{1}, ..., y_{k}]/I(G)$ are not sequentially Cohen-Macaulay. This completes the proof since pencil of lines are not $d$-arrangements with $d\geq 2$.
\end{proof}
\section{Bounds on the projective dimension}
This section is motivated by the following result due to Dao, Huneke, and Schweig \cite[Corollary 5.4]{DHS}.
\begin{proposition}
Let $G$ be a graph on $n$ vertices, and assume that $m$ is the maximal
degree of any vertex. Then 
$${\rm pd}(S/I(G)) \leq n \cdot \bigg(1 - \frac{1}{2m} \bigg).$$
\end{proposition}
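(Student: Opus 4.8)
The plan is to deduce the estimate from a purely combinatorial upper bound for the projective dimension together with a short covering count. Write $n=s+k$ for the number of vertices and recall the Auslander--Buchsbaum formula ${\rm pd}(S/I(G)) = n - {\rm depth}(S/I(G))$, so that the assertion is equivalent to the depth lower bound ${\rm depth}(S/I(G)) \ge n/(2m)$. I would route this through a covering invariant: call a set of edges $F\subseteq E$ \emph{edgewise dominating} if every vertex of $G$ lies on an edge of $F$ or is adjacent to an endpoint of some edge of $F$, and let $\epsilon(G)$ denote the least cardinality of such a set (assuming, as we may, that $G$ has no isolated vertices; these only enlarge $n$ while leaving ${\rm pd}$ unchanged, so the general case reduces to this one). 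The core input is the inequality ${\rm pd}(S/I(G)) \le n - \epsilon(G)$, equivalently ${\rm depth}(S/I(G)) \ge \epsilon(G)$; granting it, the proposition follows from the counting step below.

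For the counting, note that a single edge $e=\{u,v\}$ edgewise dominates exactly the vertices of $N[u]\cup N[v]$, where $N[\,\cdot\,]$ denotes the closed neighborhood. Since $|N[u]|\le m+1$ and $|N[v]|\le m+1$ while $u\in N[v]$ and $v\in N[u]$, we get $|N[u]\cup N[v]|\le 2m$. Hence any edgewise dominating set $F$ covers at most $2m\,|F|$ vertices, and as it must cover all $n$ of them, $\epsilon(G)\ge n/(2m)$. Combining this with the core bound gives
$$
{\rm pd}(S/I(G)) \le n-\epsilon(G) \le n-\frac{n}{2m} = n\Bigl(1-\frac{1}{2m}\Bigr),
$$
which is the claim.

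The substance lies in justifying ${\rm pd}(S/I(G))\le n-\epsilon(G)$, and this is where I expect the real obstacle. I would proceed by induction on $n$, pivoting on a vertex $x$ via the short exact sequence
$$
0 \to \bigl(S/(I(G):x)\bigr)(-1) \xrightarrow{\;x\;} S/I(G) \to S/(I(G),x) \to 0,
$$
which yields ${\rm pd}(S/I(G))\le\max\{{\rm pd}(S/(I(G):x)),\,{\rm pd}(S/(I(G),x))\}$. Using the standard identifications $(I(G):x)=(N(x))+I(G\setminus N[x])$ and $(I(G),x)=(x)+I(G\setminus x)$, these two terms equal $\deg(x)+{\rm pd}(\text{ring of }G\setminus N[x])$ and $1+{\rm pd}(\text{ring of }G\setminus x)$, so the homological recursion becomes a recursion for $\epsilon$ along the reduced graphs $G\setminus N[x]$ and $G\setminus x$. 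The neighborhood-deletion branch is benign: adjoining one edge at $x$ to an optimal dominating set of $G\setminus N[x]$ shows $\epsilon(G)\le\epsilon(G\setminus N[x])+1$, which is exactly what that branch needs. The genuine difficulty is the vertex-deletion branch, where closing the induction would demand $\epsilon(G)\le\epsilon(G\setminus x)$, an inequality that fails in general since deleting a vertex can only make domination easier. Resolving this requires choosing the pivot $x$ carefully---for instance as an endpoint of an edge in an optimal edgewise dominating set, or a vertex of extremal degree---so that the deletion does not degrade the covering count; this bookkeeping is precisely the content of the general projective-dimension bound of Dao--Huneke--Schweig \cite{DHS}, which I would invoke at this step, after which the two steps above assemble into the stated estimate.
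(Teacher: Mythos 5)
Your proposal is correct as a derivation, but you should be aware that the paper contains no proof of this statement at all: it is quoted verbatim as a result of Dao--Huneke--Schweig \cite{DHS} and serves only as input for the arrangement bound in Proposition \ref{cor:projdim}. Measured against that, your reconstruction actually supplies more than the paper does, and it matches how the corollary is obtained in the source literature: the counting step is sound (an edge $\{u,v\}$ dominates $N[u]\cup N[v]$, and since $u$ and $v$ each lie in both closed neighborhoods, $|N[u]\cup N[v]|\leq (m+1)+(m+1)-2=2m$, whence $\varepsilon(G)\geq n/(2m)$), the reduction to graphs without isolated vertices is legitimate, and combining this count with ${\rm pd}(S/I(G))\leq n-\varepsilon(G)$ yields the claim. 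The one point to be clear-eyed about is that your argument is not self-contained: all of the difficulty sits in the core inequality ${\rm pd}(S/I(G))\leq n-\varepsilon(G)$, which is a theorem of Dao and Schweig (the edgewise-domination bound appears in their companion paper rather than in \cite{DHS} itself), and your induction sketch correctly diagnoses why the naive recursion on $0\to (S/(I(G):x))(-1)\to S/I(G)\to S/(I(G),x)\to 0$ fails in the vertex-deletion branch --- one would need $\varepsilon(G)\leq \varepsilon(G\setminus x)$, which is false already for a star with $x$ its center --- before resolving it by citation. Two smaller remarks: your neighborhood-deletion step needs the convention that edgewise domination only concerns positive-degree vertices, since $G\setminus N[x]$ may acquire isolated vertices; and the opening identification $n=s+k$ imports the Levi-graph context, whereas the proposition concerns arbitrary graphs. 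Since the paper itself rests entirely on the citation, invoking the general Dao--(Huneke--)Schweig bound at the decisive step is the same move the paper makes, and your added corollary-from-theorem derivation is a genuine, correct supplement rather than a gap.
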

Using the above result we are going to provide a reasonable upper-bound for the projective dimension of an edge ideals associated with a $d$-arrangement.
\begin{proposition}
\label{cor:projdim}
Let $\mathcal{C}$ be a $d$-arrangement of $k\geq 3$ curves having $s$ intersection points such that $t_{k}=0$. Let $I(G)$ be the associated edge ideal in $S=\Bbbk[x_{1}, ..., x_{s},y_{1}, ...,y_{k}]$. Then
$$\bigg\lceil \frac{s+k}{2} \bigg\rceil \leq  {\rm pd} (S/I(G)) \leq (s+k) \bigg(1 - \frac{1}{2d^{2}(k-1)}\bigg).$$ 

\end{proposition}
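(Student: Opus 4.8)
The plan is to treat the two inequalities separately, deriving the upper bound from the cited estimate of Dao, Huneke and Schweig \cite{DHS} and the lower bound from the big height of $I(G)$. Throughout set $n=s+k$ for the number of vertices of $G$.

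For the upper bound, the only thing to establish is that the maximal degree $\Delta(G)$ of a vertex of $G$ is at most $d^{2}(k-1)$, after which the estimate ${\rm pd}(S/I(G)) \le n\left(1-\frac{1}{2\Delta(G)}\right)$ of \cite{DHS}, together with the monotonicity of $t\mapsto 1-\frac{1}{2t}$, finishes the argument. I would compute the two types of degrees directly from the incidence structure of the Levi graph. A point-vertex $x_{i}$ has degree equal to the multiplicity $m_{p_{i}}$ of $p_{i}$, and since $t_{k}=0$ no point lies on all $k$ curves, so $\deg(x_{i})=m_{p_{i}}\le k-1$. A curve-vertex $y_{j}$ has degree equal to the number of intersection points lying on $C_{j}$; because every $m_{p}\ge 2$, each summand in (\ref{equ2}) satisfies $m_{p}-1\ge 1$, whence $\deg(y_{j})=|{\rm Sing}(\mathcal{C})\cap C_{j}|\le \sum_{p\in {\rm Sing}(\mathcal{C})\cap C_{j}}(m_{p}-1)=d^{2}(k-1)$. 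Thus $\Delta(G)\le d^{2}(k-1)$, and applying \cite{DHS} with $m=\Delta(G)$ gives ${\rm pd}(S/I(G))\le n\left(1-\frac{1}{2\Delta(G)}\right)\le (s+k)\left(1-\frac{1}{2d^{2}(k-1)}\right)$.

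For the lower bound I would use the general inequality ${\rm pd}(S/I(G))\ge \operatorname{bight}(I(G))$, the maximal height of an associated prime: localizing at a minimal prime $P$ of maximal height, $S_{P}/I(G)_{P}$ is Artinian, hence of depth $0$, so Auslander--Buchsbaum over the regular local ring $S_{P}$ yields ${\rm pd}_{S_{P}}(S_{P}/I(G)_{P})=\operatorname{ht}(P)$, and projective dimension does not drop under localization. Since $I(G)$ is an edge ideal, its minimal primes are exactly the minimal vertex covers of $G$, with height equal to their cardinality. The key observation is then that both colour classes are minimal vertex covers: $V_{1}$ covers every edge (each edge is of the form $\{x_{i},y_{j}\}$), and its complement $V_{2}$ is a \emph{maximal} independent set because $G$ is bipartite and has no isolated vertices (indeed $\deg(x_{i})\ge 2$ and $\deg(y_{j})\ge 2$), so every $x_{i}$ has a neighbour in $V_{2}$; symmetrically $V_{2}$ is a minimal vertex cover. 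Hence $\operatorname{bight}(I(G))\ge \max(s,k)\ge \lceil (s+k)/2\rceil$, which is the desired bound.

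The substantive step is the degree bound for the curve-vertices through the combinatorial identity (\ref{equ2}), which pins the maximal degree to $d^{2}(k-1)$; the monotonicity remark then makes the upper bound immediate. The lower bound is comparatively soft once one records that the two colour classes of the Levi graph are themselves minimal vertex covers, the only points needing care being the inequality ${\rm pd}\ge \operatorname{bight}$ and the absence of isolated vertices, both of which are guaranteed under the standing hypothesis $t_{k}=0$.
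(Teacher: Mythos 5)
Your proof is correct, and the upper bound follows the paper's argument exactly: both you and the authors use $t_{k}=0$ to get $\deg(x_{i})=m_{p_{i}}\le k-1$, use the combinatorial count (\ref{equ2}) to get $\deg(y_{j})\le d^{2}(k-1)$, and feed $m\le \max\{k-1,\,d^{2}(k-1)\}=d^{2}(k-1)$ into the Dao--Huneke--Schweig estimate. Where you genuinely diverge is the lower bound: the paper disposes of it with a bare citation to \cite[Lemma 3.1]{EH}, whereas you supply a complete direct proof --- ${\rm pd}(S/I(G))\ge \operatorname{bight}(I(G))$ via localization at a minimal prime of maximal height plus Auslander--Buchsbaum, the identification of minimal primes of an edge ideal with minimal vertex covers, and the observation that both colour classes of the Levi graph are minimal vertex covers (no isolated vertices), giving $\operatorname{bight}(I(G))\ge \max(s,k)\ge \lceil (s+k)/2\rceil$. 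This is in substance a re-proof of the cited Erey--Hibi lemma, so your route makes the proposition self-contained at the cost of a little extra commutative algebra; it also exposes something the paper's citation hides, namely that the lower bound uses only the absence of isolated vertices (which for a $d$-arrangement with $k\ge 3$ already follows from B\'ezout, since any two curves in $\mathbb{P}^{2}_{\mathbb{C}}$ meet), while $t_{k}=0$ is needed only for the upper bound.

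Two small slips worth correcting, neither of which is a gap in the argument. First, the fact you need is that projective dimension does not \emph{increase} under localization, i.e.\ $\operatorname{pd}_{S_{P}}(M_{P})\le \operatorname{pd}_{S}(M)$, which yields $\operatorname{pd}_{S}(S/I(G))\ge \operatorname{pd}_{S_{P}}(S_{P}/I(G)_{P})=\operatorname{ht}(P)$; your phrase ``does not drop under localization'' asserts the opposite inequality. Second, your closing sentence credits $t_{k}=0$ with guaranteeing the lower-bound ingredients, but $\deg(x_{i})\ge 2$ holds by the very definition of an intersection point, and every curve-vertex has positive degree for any $d$-arrangement with $k\ge 3$; the hypothesis $t_{k}=0$ enters only through the bound $m_{p}\le k-1$ in the upper estimate.
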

\begin{proof}
If $p \in {\rm Sing}(\mathcal{C})$, then its multiplicity can be at most $k-1$ since $t_{k}=0$. On the other hand, using equation (\ref{equ2}), one can have at most $d^{2}(k-1)$ intersection points on each curves. This gives
for the maximal degree of any vertex
$m \leq {\rm max}\, \{k-1, d^{2}(k-1)\} = d^{2}(k-1).$
The stated lower bound follows directly from \cite[Lemma 3.1]{EH}, and this completes the proof.
\end{proof}
\begin{example}[Projective dimension versus regularity]
Let us consider an arrangement of $5$ general lines in the plane, i.e., such an arrangement has only double points as the intersections. We have altogether $10$ such points. Let us denote by $x_{i}$ the variable corresponding to the intersection points and by $y_{j}$ the variables corresponding to lines. The edge ideal $I(G)$ in  $S=\Bbbk[x_{1}, ..., x_{10},y_{1}, ..., y_{5}]$ of the Levi graph $G$ has the following presentation
$$I(G) = \langle x_1 y_1 , x_1 y_2 , x_2 y_2 , x_2 y_3 , x_3 y_2 , x_3 y_4 , x_4 y_2 , x_4 y_5 , x_5 y_4 , x_5 y_5 , x_6 y_1 , x_6 y_4 , x_7 y_3 , x_7 y_4 ,$$
$$x_8 y_5 , x_8 y_3 , x_9 y_1 , x_9 y_3 , x_{10} y_1 , x_{10} y_5 \rangle.$$

Using \verb{Singular{ we can compute the minimal graded free resolution of the algebra $S/I(G)$. The Betti diagram has the following form:
\begin{center}
\begin{verbatim}
         0     1     2     3     4     5     6     7     8     9    10    11
----------------------------------------------------------------------------
    0:   1     -     -     -     -     -     -     -     -     -     -     -
    1:   -    20    40    20     5     -     -     -     -     -     -     -
    2:   -     -    90   360   515   330   100    10     -     -     -     -
    3:   -     -     -    80   470  1135  1370   897   295    40     -     -
    4:   -     -     -     -     5    35   105   185   205   120    35     4
----------------------------------------------------------------------------
total:   1    20   130   460   995  1500  1575  1092   500   160    35     4

\end{verbatim}
\end{center}

We can easily see that the regularity of the algebra $S/I(G)$ is equal to $4$ and the projective dimension is is equal to $11$. Notice that Corollary \ref{cor:projdim} tells us that the projective dimension of $S/I(G)$ sits in the interval $[8, 13.125]$. 
\end{example}
Based on many similar examples (i.e., arrangements of $k \geq 3$ general lines) we can observe that the regularity of the edge ideals is smaller than the number of lines (and this follows from our general bound from the previous section), but the projective dimension grows as fast as the number of double intersection points which makes the Betti diagram wide-rectangular. It is quite natural to believe that the lower bound for the projective dimension is related to the number of intersection points of a given arrangements. In this context, we propose:
\begin{problem}
Let $\mathcal{L}$ be an arrangement of $k \geq 3$ general lines in the plane and denote by $I(G)$ the associated edge ideal. Is it true that $${\rm pd}\, (S/I(G)) \geq \binom{k}{2} \,?$$
\end{problem}

\section{Castelnuovo-Mumford regularity for powers of edge ideals of $d$-arrangements}
After studying classical algebraic properties of modules, we are going back to homological properties.
Now we pass towards a general regularity bound. Let us consider:
\begin{example}
Consider a Hirzebruch quasi-pencil $\mathcal{L}_{k}$ of $k$ lines with the associated Levi graph $G$. As usually, $V_{1} = \{x_{1}, ..., x_{k}\}$ denotes the vertices corresponding to the intersection points and $V_{2} = \{y_{1}, ..., y_{k}\}$ corresponds to the lines in $\mathcal{L}_{k}$. The associated edge ideal in $S_{k} := \Bbbk[x_{1}, ..., x_{k},y_{1}, ..., y_{k}]$ can be presented as follows:
$$I(G)_{k} = \bigg\langle x_{1}y_{1}, ..., x_{k}y_{k}, x_{1}y_{2}, x_{1}y_{3}, ..., x_{1}y_{k-1},x_{2}y_{k},x_{3}y_{k}, ..., x_{k-1}y_{k},x_{k}y_{1} \bigg\rangle.$$
By the previous results, we know that the algebra $A_{k} = S_{k}/I(G)_{k}$ is neither Cohen-Macaulay nor sequentially Cohen-Macaulay. However, it is still interesting to check other homological properties of $A_{k}$. First of all, we want to find a reasonable bound on the Castelnuovo-Mumford regularity of $A_{k}$. There are some results providing the upper-bounds for regularity of edge ideals which use the combinatorial description of graphs. 

H\'a and van Tuyl in \cite{HavanTuyl} showed that for any graph $G$ the regularity of edge ideal $I(G)$ is bounded from above by
\[
{\rm reg}(I(G)) \leq \nu(G)+1,
\]
where $\nu(G)$ is the matching number of a graph $G$ -- this is the maximum number of pairwise disjoint edges. The matching number of the Levi graph associated with $\mathcal{L}_{k}$ is equal to $k$ and this is easy to see. This allows us to conclude that for every $k \geq 3$ one has
$${\rm reg}(A_{k}) \leq k.$$
\end{example}
This example has a global manifestation that is valid for all $d$-arrangements. In order to present our main results, which are corollaries from what we will see in a moment, we need the following direct application of Hall's theorem.

\begin{proposition}
\label{Hall}
Let $\mathcal{C}$ be a $d$-arrangement with $d\geq 1$ and $k\geq 3$. Assume that $t_{k}=0$ and denote by $G$ the associated Levi graph with the bipartition $\{x_{1}, ..., x_{s}\}$, $\{y_{1}, ..., y_{k}\}$. Then $G$ has matching of size $k$.
\end{proposition}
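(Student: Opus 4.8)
The plan is to apply Hall's marriage theorem to the bipartite graph $G$, viewing the curves $y_1, \dots, y_k$ as the side we wish to saturate by a matching into the intersection points $x_1, \dots, x_s$. Recall that Hall's theorem guarantees a matching saturating $\{y_1, \dots, y_k\}$ precisely when for every subset $T \subseteq \{y_1, \dots, y_k\}$ one has $|N(T)| \geq |T|$, where $N(T)$ denotes the set of intersection points incident with at least one curve in $T$. Since any such saturating matching has size exactly $k$, verifying Hall's condition is enough to conclude.

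First I would fix an arbitrary subset $T = \{y_{j_1}, \dots, y_{j_t}\}$ of the curve-vertices and estimate the number of intersection points lying on the subarrangement $\mathcal{C}_T = \{C_{j_1}, \dots, C_{j_t}\}$. The key quantitative input is the combinatorial identity \eqref{equ2}: for each curve $C_j$ in the arrangement,
\[
d^2(k-1) = \sum_{p \in \mathrm{Sing}(\mathcal{C}) \cap C_j} (m_p - 1).
\]
For a single curve ($t=1$) this already forces $C_j$ to meet at least $d^2(k-1) \geq 1$ intersection points (indeed many, since each $m_p - 1 \leq k-1$ gives at least $d^2$ points), so $|N(\{y_j\})| \geq 1 = |\{y_j\}|$. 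The hypothesis $t_k = 0$ is what keeps each multiplicity $m_p$ strictly below $k$, preventing a single point from absorbing all incidences and thereby guaranteeing genuinely many distinct neighbors per curve.

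The main obstacle will be the intermediate range of $t$: for a subset $T$ of moderate size one must rule out the pathological situation in which all curves in $T$ pass through very few common points, which would make $|N(T)|$ small. The cleanest route is to bound $|N(T)|$ from below by counting, for the subarrangement $\mathcal{C}_T$, the analogue of \eqref{equ2} restricted to $T$: summing incidences curve-by-curve gives $t \cdot d^2(k-1) \leq \sum_{p \in N(T)} \binom{m_p^{(T)}}{2}$ or a similar incidence count, where $m_p^{(T)} \leq t$ counts curves of $T$ through $p$. Since each point in $N(T)$ contributes at most $\binom{t}{2}$ to the incidence sum (again using $t_k = 0$ so multiplicities stay bounded), one obtains a lower bound of the shape $|N(T)| \geq t \cdot d^2(k-1)/\binom{t}{2}$, and I would verify that this quantity is at least $t$ across all $2 \leq t \leq k$; the extreme cases $t = 1$ and $t = k$ are handled separately using $t_k = 0$ to ensure $|N(\{y_1, \dots, y_k\})| = s \geq k$ via the de-Bruijn--Erd\H{o}s bound already invoked in the text. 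Once Hall's condition $|N(T)| \geq |T|$ is checked for every $T$, the existence of a matching of size $k$ follows immediately.
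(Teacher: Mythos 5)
Your overall strategy---Hall's theorem applied to the curve side of the bipartition---is exactly the paper's route, and your boundary cases $t=1$ and $t=k$ are fine. The genuine gap is in the intermediate range, precisely where you located the difficulty. Your key counting inequality goes in the wrong direction: summing \eqref{equ2} over the curves of $T$ gives
\[
t\cdot d^{2}(k-1)\;=\;\sum_{p\in N(T)} m_p^{(T)}\,(m_p-1)\;\geq\; 2\sum_{p} \binom{m_p^{(T)}}{2},
\]
where $m_p$ is the multiplicity in the \emph{full} arrangement, so $\sum_p \binom{m_p^{(T)}}{2}$ is bounded \emph{above} by $\tfrac{1}{2}\,t\,d^{2}(k-1)$, not below as you claim. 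The correct exact count for the subarrangement is $\sum_p \binom{m_p^{(T)}}{2}=d^{2}\binom{t}{2}$ (B\'ezout plus ordinariness), and dividing by the maximal per-point contribution $\binom{t}{2}$ yields only $|N(T)|\geq d^{2}$, which fails for $t>d^{2}$---in particular for every line arrangement. Even taking your bound $|N(T)|\geq t\,d^{2}(k-1)/\binom{t}{2}=2d^{2}(k-1)/(t-1)$ at face value, it drops below $t$ as soon as $t(t-1)>2d^{2}(k-1)$: for $d=1$, $k=10$, $t=9$ it gives $2.25<9$. So the step ``verify that this quantity is at least $t$ across all $2\leq t\leq k$'' cannot be carried out, and the proof does not close.

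The deeper reason is that the pathological subsets you flagged are genuinely tight for pure incidence counting: if $T$ is a pencil of $t$ lines through a common point, the subarrangement has a single singular point, so any bound extracted from subarrangement incidences collapses to $|N(T)|\geq 1$. What actually rescues Hall's condition there is structural, not numerical: by \eqref{equ2} and $t_k=0$, each line of $T$ carries at least one singular point of the full arrangement besides the common point, and these extra points are pairwise distinct across $T$ because two lines meet in only one point, giving $|N(T)|\geq t+1$; the non-pencil case can then be handled by de Bruijn--Erd\H{o}s (or its $d\geq 2$ analogue from \cite{PSZR}) applied to the subarrangement. For comparison, the paper's own proof is a one-liner asserting $|N(S)|\geq (d^{2}+1)\ell$ from the degree bound $\deg(y_i)\geq d^{2}+1$, which as literally stated also overcounts (the neighborhoods of distinct curves overlap), so your instinct that the intermediate range is the crux of the matter is sound---but your proposed estimate does not supply the missing case analysis either.
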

\begin{proof}
According to Hall's theorem, we need to check whether for any subset $S \subset \{y_{1}, ..., y_{k}\}$ we have $|N(S)| \geq |S|$, where $$N(S) = \{x_{i} \, : \, \exists \, y_{j} \in S \text{ with } \{x_{i},y_{j}\} \in E\}.$$ Since $G$ is connected and for each $y_{i}$ we have that ${\rm deg}(y_{i}) \geq d^{2}+1$, then
for every $S \subset \{y_{1}, ..., y_{k}\}$ with $|S|= \ell$ one has
$$|N(S)| \geq (d^{2}+1)\cdot \ell \geq 2\ell,$$
which completes the proof.
\end{proof}

\begin{corollary}[A global regularity bound]
\label{regupp}
Let $\mathcal{C}$ be a $d$-arrangement of $k\geq 3$ curves with $d\geq 1$ in $\mathbb{P}^{2}_{\mathbb{C}}$ with $s$ intersection points, and assume that $t_{k}=0$. Let $I(G)$ be the associated edge ideal determined by the Levi graph $G$ of $\mathcal{C}$. Then
$${\rm reg}(I(G)) \leq k + 1.$$
\end{corollary}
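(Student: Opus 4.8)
The plan is to deduce the bound directly from the matching number of $G$ through the H\'a--van Tuyl inequality ${\rm reg}(I(G)) \leq \nu(G)+1$, so that the whole task reduces to computing $\nu(G)$ exactly. Proposition \ref{Hall} already supplies the lower bound $\nu(G) \geq k$, while the bipartite structure of $G$ forces the complementary bound $\nu(G) \leq k$; together these give $\nu(G) = k$ and hence the asserted value $k+1$.

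Concretely, I would first recall the H\'a--van Tuyl bound ${\rm reg}(I(G)) \leq \nu(G)+1$, valid for every graph, which reduces the problem to showing $\nu(G)=k$. Then I would record the upper bound $\nu(G) \leq k$: since $G$ is bipartite with parts $\{x_1,\dots,x_s\}$ and $\{y_1,\dots,y_k\}$, the edges of any matching meet the second part in pairwise distinct vertices, of which there are only $k$, so no matching can have more than $k$ edges. For the reverse inequality I would apply Proposition \ref{Hall} verbatim under the standing hypotheses ($d\geq 1$, $k\geq 3$, $t_k=0$), which produces a matching of size $k$ and yields $\nu(G)\geq k$. Combining the two inequalities gives $\nu(G)=k$, and substitution into the H\'a--van Tuyl bound immediately completes the proof.

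I do not anticipate a genuine obstacle here, since the substantive work --- verifying Hall's condition through the degree estimate ${\rm deg}(y_j)\geq d^2+1$ together with the connectedness of $G$ --- has already been carried out in Proposition \ref{Hall}. The only subtlety worth flagging is that one needs the exact equality $\nu(G)=k$, not merely the inequality $\nu(G)\geq k$: because the H\'a--van Tuyl bound is monotone increasing in $\nu(G)$, the lower bound alone would not pin down the estimate at $k+1$, which is precisely why the elementary bipartite upper bound $\nu(G)\leq k$ must be recorded as part of the argument.
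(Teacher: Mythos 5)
Your proof is correct and follows essentially the same route as the paper: both arguments reduce the statement to the H\'a--van Tuyl bound ${\rm reg}(I(G)) \leq \nu(G)+1$ and then control the matching number of the Levi graph. If anything, your justification of $\nu(G) \leq k$ --- the trivial observation that the edges of a matching meet the $k$-element part $\{y_{1}, \dots, y_{k}\}$ in pairwise distinct vertices --- is cleaner than the paper's vaguer degree-based ``incidence property'' argument for maximality, and you rightly flag that it is this upper bound on $\nu(G)$, rather than the size-$k$ matching supplied by Proposition \ref{Hall}, that the regularity estimate actually requires.
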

\begin{proof}
Observe that Proposition \ref{Hall} provides us the maximal value of the matching number $\nu(G)$ of the Levi graph $G$. First of all, Levi graphs $G$ satisfy the following properties on the degrees of vertices:
$$\forall \, i \in \{1, ..., s\},j \in \{1, ..., k\} \, : \, {\rm deg}(x_{i}) \geq 2 \text{ and  }{\rm deg}(y_{j}) \geq 2.$$
Now the maximality of our choice follows from a simple incidence property -- if we add another edge, then such an edge cannot be disjoint from the matching edges since each vertex of $G$ has degree greater of equal to $2$. Now we apply \cite{HavanTuyl} to conclude the proof.
\end{proof}
It is natural to wonder whether our upper bound is close to the real value of the regularity. 
\begin{example}
Consider the case of a Hirzebruch quasi-pencil with $k=4$ and observe that our global upper bound gives that the regularity of $I(G)$ is bounded by $5$. Using \verb{Singular{ we can compute the minimal graded free resolution and the Betti diagram has the following shape:
\begin{center}
\begin{verbatim}
                       0     1     2     3     4     5     6
           ---------------------------------------------------
                0:     1     -     -     -     -     -     -
                1:     -     9    12     2     -     -     -
                2:     -     -     9    24    18     6     1
                3:     -     -     -     1     2     1     -
           ---------------------------------------------------
            total:     1     9    21    27    20     7     1    
\end{verbatim}
\end{center}
This means that the regularity of the edge ideal $I(G)$ is equal to $4$.
\end{example}
\begin{example}
If $\mathcal{L}$ is a pencil of $k$ lines in the plane, then by Fr\"oberg's characterization \cite{froberg} we have
$${\rm reg}(I(G)) = 2$$
showing that the upper bound by H\'a and van Tuyl is sharp. This can be also explained in a slightly different way since $I(G)$ is sequentially Cohen-Macaulay and then by \cite{vanTuyl1} we always have 
$${\rm reg}(I(G)) = \nu(G)+1.$$
\end{example}
This observation motivates the following question.
\begin{problem}
Let $\mathcal{L}$ be a $d$-arrangement with $k\geq 3$ with the associated edge ideal $I(G)$. Is it true that ${\rm reg}(I(G)) = 2$ implies that $\mathcal{L}$ is a pencil of lines?
\end{problem}

Now we focus on powers of edge ideals. We start with revisiting some results related to ordinary powers of edge ideals. Let us recall that for a homogeneous ideal $I$ in a polynomial ring, ${\rm reg}(I^{m})$ is a linear function for $m\gg0$, it means that there exist integers $a,b,m_{0}$ such that
$${\rm reg}(I^{m}) = am+b \text{ for all } m\geq m_{0}.$$
This is a content of the result proved by Cutkosky, Herzog, and Trung in \cite{Cut}. It is known that $a$ is bounded above by the maximum of degrees of elements in a minimal set of generators of $I$, but $b$ and $m_{0}$ are in general difficult to detect. If we restrict our attention to an edge ideals $I(G)$ associated with any graph, then
$$\nu'(G) + 1 \leq {\rm reg}(I(G)) \leq \text{co-chord}(G)+1,$$
where $\nu'(G)$ is the induced matching number of $G$, and $\text{co-chord}(G)$ denotes the co-chordal cover number of $G$.
%i.e, it is the minimum number $n$ such that there exist co-chordal subgraphs $H_{1}, ..., H_{n}$ of $G$ with $E(G) = \bigcup_{i=1}^{n}E(H_{i})$.
These inequalities follow from \cite{Katzman, Wood}.

Now we are going to focus on ${\rm reg}(I^{m})$ for some $m\geq 2$. Our observation comes from Proposition \ref{Hall} and results due to Jayanthan and Selvaraja \cite{Jayan}.

\begin{corollary}
Let $\mathcal{C}$ be a $d$-arrangement with $d\geq 1$, $k\geq 3$, and such that $k\leq s$. Denote by $G$ the associated Levi graph. Then
$${\rm reg}(I(G)^{q}) \leq 2q + k - 1.$$
\end{corollary}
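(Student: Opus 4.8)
The goal is to bound $\mathrm{reg}(I(G)^q)$ for powers of the edge ideal of a Levi graph $G$ of a $d$-arrangement with $k \le s$. The stated bound $2q + k - 1$ has the shape of the Jayanthan--Selvaraja regularity formula for powers of edge ideals, where the leading coefficient $2$ matches the degree of the generators (each generator $x_i y_j$ has degree $2$), and the constant term should be governed by a matching-type invariant of $G$.

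**The plan.** The plan is to invoke the main regularity result of Jayanthan and Selvaraja \cite{Jayan}, which gives an upper bound of the form $\mathrm{reg}(I(G)^q) \le 2q + \mathrm{reg}(I(G)) - 2$, valid for all $q \ge 1$ and any graph $G$ (here the $2$ is the common degree of the generators of the edge ideal). First I would state this citation precisely and confirm that the Levi graph of a $d$-arrangement falls within its hypotheses; since an edge ideal of an arbitrary simple graph is always covered, no special structure is needed beyond $G$ being a finite simple graph, which the Levi graph is by construction. The second step is to feed in the regularity bound for $I(G)$ itself already established in Corollary \ref{regupp}, namely $\mathrm{reg}(I(G)) \le k + 1$, which holds precisely under the hypotheses $d \ge 1$, $k \ge 3$, and $t_k = 0$. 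Substituting $\mathrm{reg}(I(G)) \le k+1$ into $2q + \mathrm{reg}(I(G)) - 2$ yields
$$\mathrm{reg}(I(G)^q) \le 2q + (k+1) - 2 = 2q + k - 1,$$
which is exactly the asserted inequality.

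**Checking the hypotheses.** The subtle point is to verify that the hypothesis $k \le s$ in this corollary is enough to guarantee the standing assumptions of Corollary \ref{regupp}. Here Proposition \ref{Hall} and Corollary \ref{regupp} required $t_k = 0$ to force every curve-vertex $y_j$ to have degree at least $d^2 + 1 \ge 2$, ensuring the matching number is exactly $k$ and hence $\mathrm{reg}(I(G)) \le k+1$. I would therefore make explicit that under $k \le s$ together with the ambient $d$-arrangement setup, the condition $t_k = 0$ is in force (a $d$-arrangement with $t_k \neq 0$ would be a pencil-type degeneracy giving $s < k$, which is excluded by $k \le s$), so that the matching-number computation and thus the bound $\mathrm{reg}(I(G)) \le k+1$ apply verbatim.

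**The main obstacle.** The only real difficulty is not computational but bibliographic and logical: one must confirm that the exact form of the Jayanthan--Selvaraja bound is $\mathrm{reg}(I(G)^q) \le 2q + \mathrm{reg}(I(G)) - 2$ rather than some variant (for instance a bound phrased directly in terms of the induced matching number $\nu'(G)$, which would then need to be reconciled with the weaker $\nu(G)$-based bound of H\'a and van Tuyl used in Corollary \ref{regupp}). If their result is instead stated as $2q + \nu'(G) - 1$ or $2q + \nu'(G)$, I would need to bound $\nu'(G)$ rather than $\mathrm{reg}(I(G))$; since $\nu'(G) \le \nu(G) = k$ by Proposition \ref{Hall}, the same final constant $k - 1$ emerges, so the conclusion is robust to which of these equivalent formulations is used. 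Thus the decisive step is correctly transcribing the inequality from \cite{Jayan} and chaining it with Corollary \ref{regupp}; everything after that is a one-line substitution.
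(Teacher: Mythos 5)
There is a genuine gap, and it sits exactly at the step you yourself identify as decisive. The inequality ${\rm reg}(I(G)^{q}) \leq 2q + {\rm reg}(I(G)) - 2$ for an arbitrary graph $G$ is \emph{not} a theorem of Jayanthan and Selvaraja: it is an open conjecture of Banerjee, Beyarslan, and H\'a (see \cite{edges}), established only for special classes of graphs, and to my knowledge not even for all bipartite graphs. What \cite[Theorem 4.4]{Jayan} actually provides -- and what the paper's one-line proof invokes -- is the bound ${\rm reg}(I(G)^{q}) \leq 2q + \text{co-chord}(G) - 1$ in terms of the co-chordal cover number. Your chain through Corollary \ref{regupp} cannot be repaired by comparing invariants, because the known inequality ${\rm reg}(I(G)) \leq \text{co-chord}(G) + 1$ of Woodroofe \cite{Wood} runs in the wrong direction: a bound on ${\rm reg}(I(G))$ yields no bound on $\text{co-chord}(G)$. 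Your fallback via the induced matching number fails for a similar reason: $2q + \nu'(G) - 1$ is a \emph{lower} bound for ${\rm reg}(I(G)^{q})$ in general, not an upper bound, so bounding $\nu'(G) \leq \nu(G) = k$ proves nothing. The missing idea is elementary and specific to Levi graphs: every edge of $G$ joins a point-vertex to one of the $k$ curve-vertices $y_{1}, \dots, y_{k}$, so the $k$ stars centered at the $y_{j}$ cover the edge set of $G$; a star is complete bipartite, hence co-chordal, whence $\text{co-chord}(G) \leq k$ and the asserted bound $2q + k - 1$ follows from \cite[Theorem 4.4]{Jayan} with no further input.

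A secondary problem is your claim that $k \leq s$ forces $t_{k} = 0$. That is correct for line arrangements (a pencil has $s = 1 < k$) but false for $d \geq 2$: if all $k$ curves of a $d$-arrangement pass through one common ordinary point, any two of them still meet transversally in $d^{2} - 1$ further points, so $s$ can far exceed $k$ while $t_{k} = 1$. Hence even if your main inequality were available, the reduction to the hypotheses of Proposition \ref{Hall} and Corollary \ref{regupp} (both of which require $t_{k} = 0$) would be broken for $d \geq 2$. The star-cover argument above sidesteps this entirely, since $\text{co-chord}(G) \leq k$ holds with no assumption on $t_{k}$ -- which is presumably why the corollary is stated under the weaker hypothesis $k \leq s$ rather than $t_{k} = 0$.
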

\begin{proof}
This follows directly from \cite[Theorem 4.4]{Jayan} and the fact that $\text{co-chord}(G) \leq k$.
\end{proof}
\begin{remark}\
\begin{itemize}
    \item It is very natural to wonder whether there is a reasonable upper bound on the regularity of \textit{symbolic powers} of edge ideals. Let us recall that by \cite[Theorem 5.9]{Simis} for every bipartite graph $G$ and $m \in \mathbb{N}$ one has
$$I(G)^{(m)} = I(G)^{m},$$
which means that we have automatically obtained
$${\rm reg}(I(G)^{(q)}) \leq 2q + k-1.$$
\item It is natural to wonder whether we can find a reasonable upper-bound on the regularity of squarefree powers of edge ideals. This problem, in its whole generality, is open.
\end{itemize}
\end{remark}
We conclude this section by an observation regarding the Rees algebra of the edge ideal. Let us recall that for a given ideal $I$ the Rees algebra $\mathcal{R}(I) = \bigoplus_{i=0}^{\infty}I^{i}t^{i} \subset R[t]$.
\begin{corollary}
Let $\mathcal{C}$ be a $d$-arrangement with $k\geq 3$ and denote by $I(G)$ the associated edge ideal with the Levi graph $G$. Then 
$${\rm reg}(\mathcal{R}(I(G))) = k.$$
\end{corollary}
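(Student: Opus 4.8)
The plan is to reduce the computation of ${\rm reg}(\mathcal{R}(I(G)))$ to a single combinatorial invariant of the Levi graph, its matching number $\nu(G)$, and then to evaluate that invariant using Proposition \ref{Hall}. The input I would take from the literature on Rees algebras of edge ideals is the exact equality ${\rm reg}(\mathcal{R}(I(G))) = \nu(G)$, valid for the edge ideal of a finite simple graph $G$ once $\mathcal{R}(I(G))$ is given its standard grading (presenting it as a quotient of $S[T_1,\dots,T_r]$ in which each variable $T_i$, corresponding to a minimal generator $x_iy_j t$, has degree $1$). Granting this equality, the entire content of the corollary is the statement $\nu(G)=k$.

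First I would pin down $\nu(G)$. The Levi graph $G$ is bipartite with parts $\{x_1,\dots,x_s\}$ and $\{y_1,\dots,y_k\}$, so every matching uses at most $\min(s,k)\le k$ edges and hence $\nu(G)\le k$. In the opposite direction, Proposition \ref{Hall} exhibits a matching of $G$ of size $k$. Therefore $\nu(G)=k$, and feeding this into the displayed equality gives ${\rm reg}(\mathcal{R}(I(G)))=\nu(G)=k$, as asserted.

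If one prefers to avoid invoking the exact formula and instead to rely only on material developed in the paper, the same value can be obtained by a squeeze. The lower bound ${\rm reg}(\mathcal{R}(I(G)))\ge \nu(G)=k$ is the elementary half of the comparison above. For the upper bound one uses a comparison theorem expressing ${\rm reg}(\mathcal{R}(I(G)))$ through the regularities of the ordinary powers $I(G)^q$: since $I(G)$ is generated in degree $2$, the growth $\sup_{q\ge 1}\big({\rm reg}(I(G)^q)-2q\big)$ controls ${\rm reg}(\mathcal{R}(I(G)))$, and the bound ${\rm reg}(I(G)^q)\le 2q+k-1$ proved earlier (which applies because $t_k=0$ forces $k\le s$) yields ${\rm reg}(I(G)^q)-2q\le k-1$ for all $q$. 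Combining the two bounds again pins the regularity of the Rees algebra to $k$.

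The main obstacle is not the combinatorics, which is immediate from Proposition \ref{Hall}, but rather the bookkeeping around conventions and hypotheses. I would have to state precisely the grading on $\mathcal{R}(I(G))$ for which the equality ${\rm reg}(\mathcal{R}(I(G)))=\nu(G)$ is being quoted, and check that the cited result applies verbatim to our bipartite Levi graphs. One must also keep the assumption $t_k=0$ (equivalently $k\le s$) in force: for a pencil of $k$ lines one has $s=1$, $\nu(G)=1$, and indeed ${\rm reg}(\mathcal{R}(I(G)))=1\neq k$, so the equality ${\rm reg}(\mathcal{R}(I(G)))=k$ genuinely fails when a point of multiplicity $k$ is present. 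Under the standing hypothesis that $t_k=0$, both routes above go through and deliver ${\rm reg}(\mathcal{R}(I(G)))=k$.
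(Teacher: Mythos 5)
Your primary argument is precisely the paper's proof: the authors likewise obtain the result by combining the equality ${\rm reg}(\mathcal{R}(I(G)))=\nu(G)$ for edge ideals of bipartite graphs (the cited \cite[Theorem 1.2]{Cid}) with Proposition \ref{Hall}, which pins down $\nu(G)=k$ exactly as in your squeeze $k\leq \nu(G)\leq \min(s,k)$. Your remark that the hypothesis $t_{k}=0$ must be kept in force is moreover a genuine (and correct) emendation of the statement as printed: Proposition \ref{Hall} assumes $t_{k}=0$, and your pencil example, where $s=1$ and $\nu(G)=1$ so that ${\rm reg}(\mathcal{R}(I(G)))=1\neq k$, shows the corollary fails without that assumption, which the paper's formulation silently omits.
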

\begin{proof}
This follows from \cite[Theorem 1.2]{Cid} and Proposition \ref{Hall}.
\end{proof}

\section*{Acknowledgments}
The project was conducted when the first author was visiting twice the University of Osnabr\"uck in the framework of DAAD activities, namely the Personal Funding Programme \textit{Research Stays for University Academics and Scientists} in February 2021, and in July 2021 as the part of the Institutional Programme \textit{Ostpartnerschaften}. We would like to thank anonymous referees for comments that allowed to improve the paper.

\section*{Data availability} This is not applicable as the results presented in this manuscript rely on no external sources of data or code.

\vskip 0.5 cm

\bigskip
Piotr Pokora,
Department of Mathematics,
Pedagogical University of Krak\'ow,
Podchor\c a\.zych  2,
PL-30-084 Krak\'ow, Poland. \\
\nopagebreak
\textit{E-mail address:} \texttt{piotr.pokora@up.krakow.pl, piotrpkr@gmail.com}

\bigskip
Tim R\"omer,
Institute of Mathematics,
Research Unit Data Science,
Osnabr\"uck University, 
Albrechtstr. 28A
D-49076 Osnabrück, Germany. \\
\nopagebreak
\textit{E-mail address:} \texttt{troemer@uos.de}


\begin{thebibliography}{000}
\bibitem{edges}
A. Banerjee, S. K. Beyarslan, and H. T  H\'a, Regularity of edge ideals and their powers. 
Feldvoss, Jörg (ed.) et al., Advances in algebra. Proceedings of the southern regional algebra conference, SRAC 2017, Mobile, AL, USA, March 17–19, 2017. Cham: Springer. Springer Proc. Math. Stat. 277, 17 -- 52 (2019).
\bibitem{BrunsHerzog}
W. Bruns and J. Herzog, \textit{Cohen-Macaulay rings}. Cambridge Studies in Advanced Mathematics. 39. Cambridge: Cambridge University Press. xi, 403 p. (1993).
\bibitem{Cid}
T. Cid-Ruiz, Regularity and Gröbner bases of the Rees algebra of edge ideals of bipartite graphs. \textit{Matematiche} \textbf{73(2)}: 279 -- 296 (2018).
\bibitem{CookNagel}
D. Cook II and U. Nagel, Cohen-Macaulay graphs and face vectors of flag complexes. \textit{SIAM J. Discrete Math.} \textbf{26(1)}: 89 -- 101 (2012).
\bibitem{Coxeter}
H. S. M. Coxeter, Self-Dual Configurations and Regular Graphs. \textit{Bull. Amer. Math. Soc.} \textbf{56}: 413 -- 455 (1950).
\bibitem{Cut}
 S. D. Cutkosky, J. Herzog, and N. V. Trung. Asymptotic behaviour of the Castelnuovo-Mumford regularity. \textit{Compositio Math.}  \textbf{118(3)}: 243 -- 261 (1999).
\bibitem{DHS}
H. Dao, C. Huneke, and J. Schweig, Bounds on the regularity and projective dimension of ideals associated to graphs. \textit{J. Algebr. Comb.} \textbf{38(1)}: 37 -- 55 (2013).
\bibitem{deBruijnErdos}
N. G. de Bruijn and P. Erd\H{o}s, On a combinatorial problem.  \textit{Indagationes Math.} \textbf{10}: 421 -- 423 (1948).
\bibitem{Singular}
W.~Decker, G.-M. Greuel, G.~Pfister, and H.~Sch\"onemann,
\newblock {\sc Singular} {4-1-1} --- {A} computer algebra system for polynomial computations.
\newblock \url{http://www.singular.uni-kl.de}, 2018.
\bibitem{EH}
N. Erey and T. Hibi, The size of Betti tables of edge ideals arising from bipartite graphs. \textbf{arXiv:2103.04766}.
\bibitem{FV}
M. Fiorentini and W. Vogel, Old and new results and problems on Buchsbaum modules. I. \textit{Semin. Geom., Univ. Studi Bologna} \textbf{1988-1991}: 53 -- 61 (1991). 
\bibitem{froberg}
R. Fr\"oberg, On Stanley-Reisner rings. Topics in algebra, Part 2 (Warsaw, 1988), 57 -- 70, Banach
Center Publ., 26, Part 2, PWN, Warsaw, 1990.
\bibitem{GotoWatanabe}
S. Goto and K. Watanabe, On graded rings. II: $\mathbb{Z}^{n}$-graded rings. \textit{Tokyo J. Math.} \textbf{1}: 237 -- 261 (1978). 
\bibitem{Gu}
Y. Gu, H. T. H\'a, J. L. O’Rourke, and J. W. Skelton, Symbolic powers of edge ideals of graphs. \textit{Commun. Algebra} \textbf{48(9)}: 3743 -- 3760 (2020).
\bibitem{HavanTuyl}
H. T. H\`a and A. van Tuyl, Monomial ideals, edge ideals of hypergraphs, and their graded Betti numbers. \textit{J. Algebr. Comb.} \textbf{27(2)}: 215 -- 245 (2008).
\bibitem{HerzogHibi}
J. Herzog and T. Hibi, Distributive lattices, bipartite graphs and Alexander duality. \textit{J. Algebr. Comb.} \textbf{22(3)}: 289 -- 302 (2005).
\bibitem{Jayan}
A. V. Jayanthan and S. Selvaraja, Upper bounds for the regularity of powers of edge ideals of graphs. \textbf{arXiv:1805.01412}.
\bibitem{Katzman}
M. Katzman, Characteristic-independence of Betti numbers of graph ideals. \textit{J. Combin. Theory Ser. A} \textit{113(3)}: 435 -- 454 (2006).
\bibitem{PSZR}
P. Pokora, X. Roulleau, and T. Szemberg, Bounded negativity, Harbourne constants and
transversal arrangements of curves. \textit{Ann. Inst. Fourier (Grenoble)} \textbf{67(6)}: 2719 – 2735
(2017).
\bibitem{Portakal}
I. Portakal, On rigidity of toric varieties arising from bipartite graphs. \textit{J. Algebra} \textbf{569}: 784 -- 822 (2021).
\bibitem{Tim1}
T. R\"omer, Cohen-Macaulayness and squarefree modules. \textit{Manuscripta Math.} \textbf{104(1)}: 39 -- 48 (2001).
\bibitem{Tim2}
T. R\"omer, Generalized Alexander duality and applications. \textit{Osaka J. Math.} \textbf{38(2)}: 469 -- 485 (2001).
\bibitem{Schenck}
H. Schenck and S. Toh\v aneanu, Freeness of conic-line arrangements in $\mathbb{P}^{2}$. \textit{Comment. Math. Helv.} \textbf{84(2)}: 235 -- 258 (2009). 
\bibitem{Simis}
A Simis, W. V. Vasconcelos, and R. H. Villarreal, On the ideal theory of graphs. \textit{J. Algebra} \textbf{167(2)}: 389 -- 416 (1994).
\bibitem{StuckradVogel}
J. St\"uckrad and W. Vogel, \textit{Buchsbaum rings and applications. An interaction between algebra, geometry, and topology}. Mathematische Monographien, Bd. 21. Berlin: VEB Deutscher Verlag der Wissenschaften. 286 p. (1986).
\bibitem{vaTuylVillarreal}
A. van Tuyl and R. Villarreal, Shellable graphs and sequentially Cohen-Macaulay bipartite graphs. \textit{J. Comb. Theory, Ser. A} \textbf{115(5)}: 799 -- 814 (2008).
\bibitem{vanTuyl1}
A. van Tuyl,  Sequentially Cohen-Macaulay bipartite graphs: vertex decomposability and regularity. \textit{Arch. Math. (Basel)} \textbf{93(5)}: 451 -- 459 (2009).
\bibitem{vanTuyl2}
A. van Tuyl, A beginner’s guide to edge and cover ideals. Bigatti, Anna M. (ed.) et al., Monomial ideals, computations and applications. Based on lectures of the conference MONICA: monomial ideals, computations and applications, CIEM, Castro Urdiales, Spain, July 2011. Berlin. Lecture Notes in Mathematics 2083, 63-94 (2013).
\bibitem{VillarrealB}
R. Villarreal, \textit{Monomial algebras}. Monographs and Research Notes in Mathematics. Boca Raton, FL: CRC Press, xviii, 686 p. (2015).
\bibitem{Yanagawa}
K. Yanagawa, Alexander duality for Stanley-Reisner rings and squarefree $\mathbb{N}^{n}$-graded modules. \textit{J. Algebra} \textbf{225(2)}: 630 -- 645 (2000).
\bibitem{Wood}
R. Woodroofe, Matchings, coverings, and Castelnuovo-Mumford regularity. \textit{J. Commut. Algebra} \textbf{6(2)}: 287 -- 304 (2014).

\end{thebibliography}
\end{document}